\documentclass{amsart}

\setlength{\textheight}{43pc} \setlength{\textwidth}{28pc}

\usepackage{amssymb,amsmath}
\usepackage{url}
\usepackage{amscd}
\usepackage{texdraw}
\usepackage[all]{xy}
\usepackage{fancyhdr}
\usepackage{amsmath}
\usepackage{graphicx}
\usepackage{subcaption}

\usepackage{amscd,color}
\usepackage[shortlabels]{enumitem}

%


\theoremstyle{plain}
\newtheorem{thm}{Theorem}
\newtheorem*{starthm}{Main Theorem}
\newtheorem{prop}[thm]{Proposition}

\newtheorem{definition}{Definition}

\newtheorem{remark}{Remark}

\input txdtools

\newcommand{\cala}{{\mathcal A}}
\newcommand{\calb}{{\mathcal B}}

\newcommand{\calf}{{\mathcal F}}

\newcommand{\calm}{{\mathcal M}}

\newcommand{\calr}{{\mathcal R}}
\newcommand{\cals}{{\mathcal S}}

\renewcommand{\AA}{{\mathbb A}}
\newcommand{\CC}{{\mathbb C}}
\newcommand{\DD}{{\mathbb D}}

\newcommand{\RR}{{\mathbb R}}

\newcommand{\ZZ}{{\mathbb Z}}

\newcommand{\bj}{{\mathbf{j} }}
\newcommand{\bJ}{{\mathbf{J} }}
\newcommand{\bjinf}{{\mathbf{j}_{\infty} }}
\newcommand{\bJinf}{{\mathbf{J}_{\infty} }}

\renewcommand{\hat}{\widehat}

\newcommand{\la}{\lambda}

\newcommand{\Log}{\mathrm{ Log}}

\begin{document}
\title[Accessible boundary points]{Accessible Boundary Points in  the Shift Locus of a Familiy of Meromorphic Functions with Two Finite Asymptotic Values}

\author{Tao Chen}
\address{LaGuardia Community College CUNY\\
Department of Mathematics\\
 31-10 Thomson Avenue \\ Long Island City, NY 11101, USA \\}
\email{tchen@lagcc.cuny.edu}

\author{Yunping Jiang}
\address{Department of Mathematics\\
 Queens College CUNY\\
   65-30 Kissena Blvd.\\
  Queens, NY 11367-1597, USA and   Mathematics Program,  Graduate Center CUNY\\
  365 Fifth Avenue\\
 New York, NY 10016, USA\\}
\thanks{This material is based upon work supported by the National Science Foundation. It is partially supported by a collaboration grant from the Simons Foundation (grant number 523341) and PSC-CUNY awards.}
\email{yunping.jiang@qc.cuny.edu}
\author{Linda Keen$^*$}
\address{  Mathematics Program, Graduate Center CUNY\\
  365 Fifth Avenue\\
 New York, NY 10016, USA\\}
\email{ lkeen@gc.cuny.edu; linda.keenbrezin@gmail.com}


\subjclass[2010]{Primary: 37F30, 37F20, 37F10; Secondary: 30F30, 30D30, 32A20}

\begin{abstract}  In this paper we continue the study,  began in \cite{CJK2},  of the bifurcation locus of a family of meromorphic functions with two asymptotic values,  no critical values and an attracting fixed point.  If we fix the multiplier of the fixed point, either of the two asymptotic values determines a one-dimensional parameter slice for this family.   We proved that the bifurcation locus divides this parameter slice into three regions, two of them analogous to the Mandelbrot set and one, the shift locus, analogous to the complement of the Mandelbrot set.  In \cite{FK, CK} it was proved  that the points in the bifurcation locus corresponding to functions with a parabolic cycle, or those for which some iterate of one of the asymptotic values lands on  a pole    are accessible boundary points of  the hyperbolic components of the Mandelbrot-like sets.  Here we prove these points, as well as   the points where some iterate of the asymptotic value lands on a repelling periodic cycle,  are also accessible from the shift locus.
\end{abstract}

\maketitle

\section{Introduction}
The investigation of the bifurcation locus in the parameter plane of quadratic polynomials where the dynamics is unstable has led to a lot of interesting mathematics and is still not completely understood.
In an early paper, [GK], part of the parameter space for the dynamics of the family $\calr_2$ of rational maps with one attractive fixed point and two critical values was shown to be similar to the parameter space of quadratic polynomials, although the existence of two varying singular values and poles made its structure  more complicated.   In particular, in a one-dimensional slice formed by fixing the multiplier of the fixed point,  the bifurcation locus separates the parameter plane into three regions,  one like the complement of the Mandelbrot set, and called the shift locus,  where both critical values are attracted to the same cycle and two complementary regions that are Mandelbrot sets containing stable, or hyperbolic components  where the critical values are attracted to different cycles.

  In this paper, we look at how the situation differs for the family
\[ \calf_2= \Big\{  f_{\la,\rho}(z)= \frac{e^z - e^{-z}}{\frac{e^z}{\la}- \frac{e^{-z}}{\mu}}, \, \, \frac{1}{\la} - \frac{1}{\mu}=\frac{2}{\rho} \Big\} \]
of meromorphic functions with two asymptotic values $\la$ and $\mu$,  no critical values and a fixed point at the origin whose multiplier $\rho$ lies in the punctured unit disk.
Some things are the same, of course.  In particular, stable dynamical behavior is always eventually periodic and controlled by the singular values.  There are, though, significant differences due to the maps in $\calf_2$ being infinite to one and to their branching over the singular values being logarithmic rather than algebraic.

In \cite{CJK2}, we studied the family $\calf_2$ using the  holomorphic dependence of the functions on two parameters, the multiplier $\rho$ and the asymptotic value $\la$, the other asymptotic value $\mu$ being a simple function of $\rho$ and $\la$.  We proved that, like $\calr_2$, if we take a slice by fixing $\rho$ in the punctured unit disk, the bifurcation locus in the resulting parameter plane again divides it into three distinct regions, one, a shift locus like the complement of the Mandelbrot set, where   both asymptotic values are attracted to the fixed point at the origin, and two complementary regions that are  Mandelbrot-like. They each contain  infinitely many hyperbolic  components  where the asymptotic values are attracted to different periodic cycles.    It had already been shown,  (see \cite{KK, FK, CK}), that each hyperbolic  component of the Mandelbrot-like sets is a universal cover of $\DD^*$ and that   the covering map extends continuously to the boundary.  Like the hyperbolic components of Mandelbrot set, the boundary contains points where the map has a parabolic cycle.  Unlike the Mandelbrot set, however, the hyperbolic components do not contain a ``center'' where the  periodic cycle contains the critical value and has multiplier zero.  Instead, they contain a distinguished boundary point with the property that as the parameter approaches this point, the limit of the multiplier of the periodic cycle attracting the asymptotic value is zero. It is thus called a ``virtual center''.   Virtual centers are also characterized by the property that one of the asymptotic values is a prepole, that is: some iterate lands on infinity and its orbit is finite.

In this paper, we are interested in the bifurcation locus in the slice of $\calf_2$ with $\rho$ fixed.  In particular, we characterize two subsets of points that are accessible from inside the hyperbolic components in the sense that  there is a curve in the hyperbolic component  whose accumulation set on the boundary consists only of that point.  In addition we prove that points in the bifurcation locus where the asymptotic value lands on a repelling periodic cycle are accessible from the shift locus.  In section~\ref{boundaries} we prove our main result:

\begin{starthm}\label{main} The parameters in the bifurcation locus that are virtual centers and the parameters for which the function $f_{\la,\rho}$ has a parabolic cycle or for which an asymptotic value is mapped onto a repelling cycle   by some iterate of $f_{\la, \rho}$ are accessible from inside the shift locus.
\end{starthm}

In other words,  the points with parabolic cycles and the virtual centers in the bifurcation locus are  accessible  both from inside  the hyperbolic components of the Mandelbrot-like sets and from the inside of the shift locus.

The first step in proving our results is to put a ``coordinate structure'' on the shift locus.   We showed in \cite{CJK2} that the shift locus is an annulus.   We  summarize that argument in section~\ref{Model}.  The discussion is similar to that for polynomials and  rational maps.   It uses quasiconformal mappings together with
 the dynamics of a fixed ``model function'' to characterize the shift locus by defining a ``Green's  function''  for the model.  This  function pulls back from the dynamic space of the model to the shift locus where it measures the relative rates of attraction of the asymptotic values to zero.  The inverse of the Green's  function defines level and gradient curves for those rates in the shift locus.

 The transcendental qualities of    $f_{\la,\rho}$ impart a much more complicated structure near the boundary of the shift locus than one has for rational maps.  We describe this structure first in the model.  As we did for rational maps in \cite{GK}, we start with a fixed level curve  of Green's  function and apply the dynamics of the model map.  In that case, there were two preimages of the curve but now there are infinitely many.

To understand the structure, we need first
 to identify each of the
infinitely many inverse branches of the function $f_{\la,\rho}$ with an integer.   The $n^{th}$ backward orbit of a point can then be assigned to a sequence of $n$ of these integers.  Applying $f_{\la,\rho}$ to the map acts as a shift map on the sequence.   As  for the Julia set of a rational map, the periodic points are assigned infinite periodic sequences.  Prepoles, which are now preimages of the essential singularity, correspond to  finite sequences.   Thus, assigning the ``integer" infinity to the point at infinity and taking the closure in the space of finite and infinite sequences of integers, we obtain a representation  of the Julia set of the model map with its dynamics by a sequence space that is compatible with the shift map.

We use this identification of the Julia set with the sequence space to construct paths in our model space,  and in section~\ref{shift} we transfer these paths  from the model to the shift locus.   The heart of the proof of the main theorem is to show that the paths in the shift locus have unique end points.

\section{Notation and basics}
\label{basics}

  Here we briefly recall  the basic definitions, concepts and notation we will use.  We refer the reader to standard sources on meromorphic dynamics for details and proofs.  See e.g. \cite{Berg, BF, DK2,KK,BKL1,BKL2,BKL3,BKL4}.

 We denote the complex plane by $\CC$, the Riemann sphere by $\hat\CC$ and the unit disk by $\DD$.  We denote the punctured plane by $\CC^* = \CC \setminus \{  0 \}$ and the punctured disk by $\DD^* = \DD \setminus \{  0 \}$.

 \medskip
To study the dynamics of a  family of meromorphic functions, $\{ f_{\la}(z) \}$,  we look at the orbits of points formed by iterating the function $f(z)=f_{\la}(z)$.   If $f^k(z)=\infty$ for some $k>0$, $z$ is called a pre-pole of order $k$ --- a pole is a pre-pole of order $1$.  For meromorphic functions, the poles and prepoles have  finite orbits that end at infinity.  The {\em Fatou set or Stable set, $F_f$} consists of those points at which the iterates form a normal family.   The Julia set $J_f$ is the complement of the Fatou set and contains all the poles and prepoles.

\medskip
If there exists a minimal $n$ such that  $f^n(z)=z$, then $z$ is called {\em periodic}.   Periodic points are classified by their multipliers, $\rho(z)=(f^n)'(z)$ where $n$ is the period: they are repelling if $|\rho(z)|>1$, attracting if $0< |\rho(z)| < 1$,   super-attracting  if $\rho=0$ and neutral otherwise.  A neutral periodic point is {\em parabolic} if $\rho(z)=e^{2\pi i p/q}$ for some rational $p/q$.  The Julia set is the closure of the set of repelling periodic points and is also the closure of the prepoles, (see e.g. \cite{BKL1}).

\medskip
A point $a$ is a {\em singular value} of $f$ if $f$ is not a regular covering map over $a$.
\begin{itemize}
\item    $a$ is a {\em critical value} if for some $z$, $f'(z)=0$ and $f(z)=a$.
\item    $a$ is an {\em asymptotic value} for $f$ if there is a path $\gamma(t)$ such that\\ $\lim_{t \to \infty} \gamma(t) = \infty$ and $\lim_{t \to \infty} f(\gamma(t))=a$;  $\gamma(t)$ is called an {\em asymptotic curve or an asymptotic path for $a$}.   
\item The {\em set of singular values $S_f$} consists of the closure of the critical values and the asymptotic values.  The {\em post-singular set is
\[P_f= \overline{\cup_{a \in S_f} \cup_{k=0}^\infty f^k(a)  \cup \{\infty\}}. \]}
For notational simplicity, if a prepole $p_n$ of order $n$ is a singular value, $\cup_{k=0}^{n} f^k(p_n)$ is a finite set with $f^{n} (p_n)=\infty$.
\end{itemize}
\medskip
A map $f$ is {\em hyperbolic} if $J_f \cap P_f = \emptyset$.

\medskip
In \cite{DK2} it is proved that every
 component of the Fatou set of a function with two asymptotic values and no critical values is  eventually periodic: that is,
  $f^n(D) \subseteq f^m(D)$ for some integers $n,m$.    In addition, the   periodic cycles of stable domains for these functions are classified there as follows:
\begin{itemize}
\item Attracting:  if the periodic cycle of domains contains an attracting cycle in its interior.
\item Parabolic: if there is a parabolic periodic cycle on its boundary.
\item Rotation: if $f^n: D \rightarrow D$ is holomorphically conjugate to a rotation map.  It follows from arguments in \cite{KK} that for maps with only two asymptotic values and no critical values,  rotation domains are always simply connected.  These domains are called {\em Siegel disks}.
\end{itemize}

\medskip
A standard result in dynamics is that each attracting or parabolic  cycle of domains contains a singular value. The boundary of each rotation domain is contained in the accumulation set of the forward orbit of a singular value.  (See e.g.~\cite{M1}, chap 8-11 or~\cite{Berg}, Sect.4.3.)

 By a theorem of Nevanlinna, \cite{Nev}, any meromorphic function with only two asymptotic values and no critical values   can be explicitly written as a linear transformation of the exponential map.  Therefore, putting the essential singularity at infinity and
 conjugating by an affine map we may assume the origin is a fixed point with multiplier $\rho$, and we may write $\calf_2$ as a family of  functions of the form
\[ \calf_2= \Big\{  f_{\la,\rho}(z)= \frac{e^z - e^{-z}}{\frac{e^z}{\la}- \frac{e^{-z}}{\mu}}, \, \, \frac{1}{\la} - \frac{1}{\mu}=\frac{2}{\rho} \Big\} \]
so that $\la$ and $\mu$ are the asymptotic values.
  Note that $f_{\la,\rho}(z)$ is not defined for $\la=0, \rho/2$.

The family $\calf_2$ depends on two complex parameters. We form a {\em dynamically natural slice} of $\calf_2$, in the sense of \cite{FK}, by fixing $\rho$, $|\rho|<1$,  and taking the asymptotic value $\la \in \CC \setminus \{0, \rho/2 \}$ as the parameter.  The other asymptotic value $\mu$ is then a simple function of $\rho$ and $\la$.  We write the functions in the slice as $f_{\la}=f_{\la,\rho}$.

Since the origin is an attracting fixed point, for every $\la \in \CC \setminus \{0, \rho/2 \}$, either $\la$ or $\mu=\mu(\la, \rho)$ is attracted by $0$.

\begin{definition}\label{calm} Let
\[ \calm_{\la}=\{\la \in \CC \setminus \{0, \rho/2 \} \,|\, \la \mbox{ is NOT attracted to the origin} \} ,\]
\[ \calm_{\mu}=\{\la \in \CC \setminus \{0, \rho/2 \}  \,|\,  \mu \mbox{  is NOT attracted to the origin} \} \]  and
\[ \cals =\{\la \in \CC \setminus \{0, \rho/2 \}  \,|\,  \la, \mu  \mbox{ are both attracted to the origin} \}.  \]
\end{definition}

We focus the discussion below on $\calm_{\la}$.   It is a summary of results in \cite{FK} and \cite{CJK2}.  We refer the reader to those papers for proofs. There is a completely analogous discussion for $\calm_{\mu}$.   See Figures \ref{lambdaplane} and~\ref{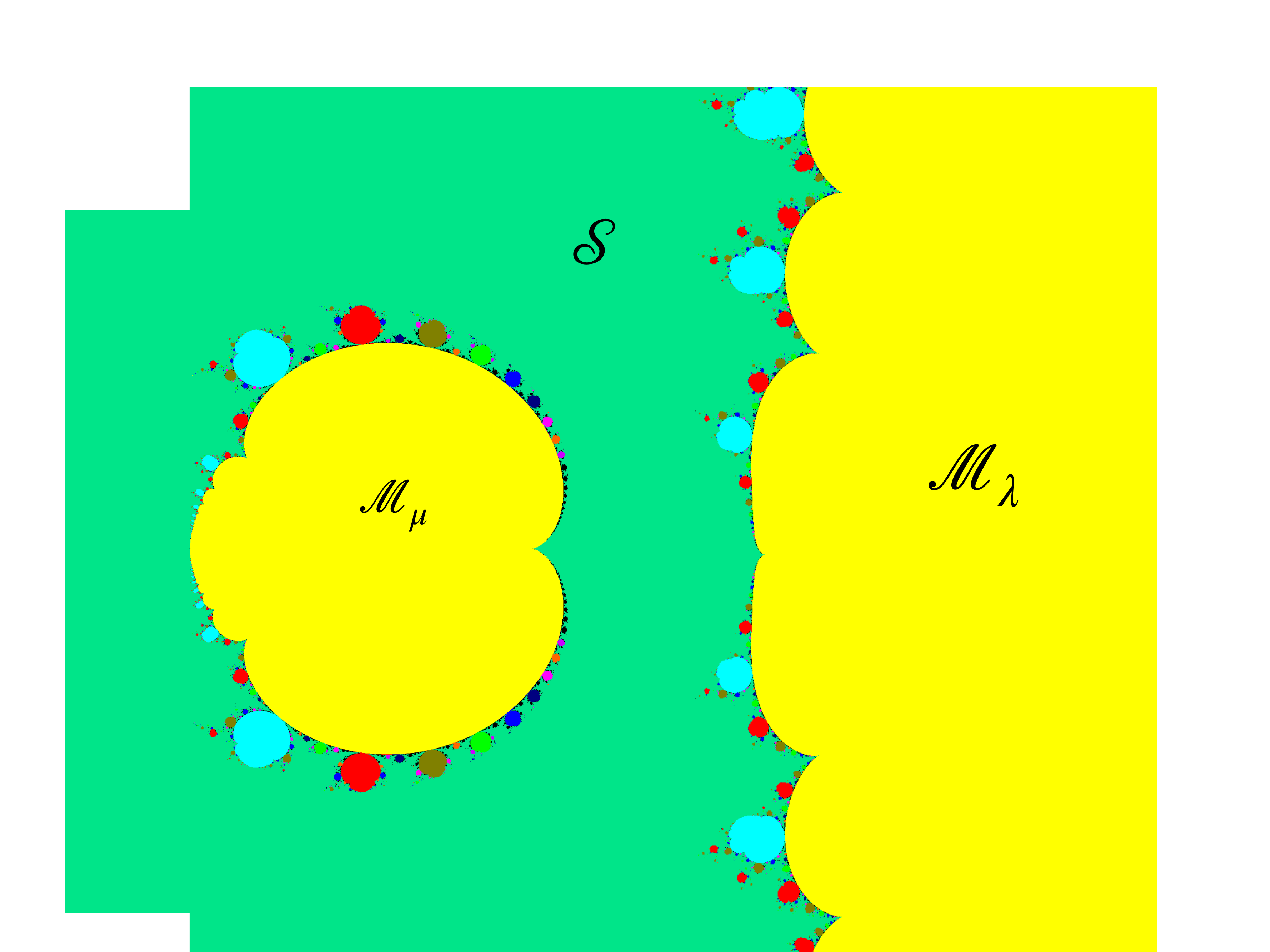}.

 \begin{figure}
     \centering
  \includegraphics[width=5in]{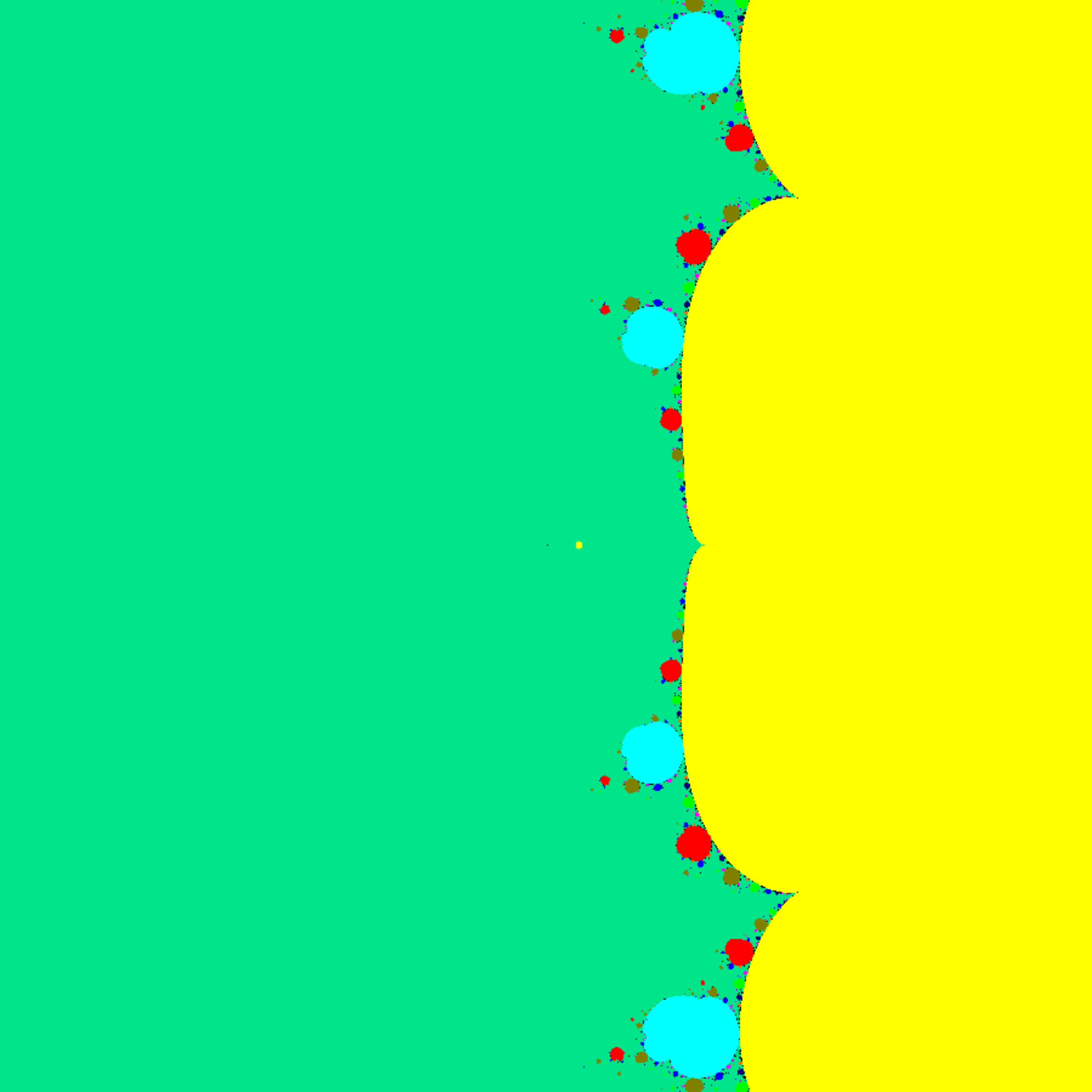}
  \caption{The $\la$ plane divided into the shift locus and shell components.  The shift locus is shown in green. In the yellow shell component, $\la$ is attracted to a fixed point; in the cyan components it is attracted to a period two cycle; in the red components, a period three cycle and in the khaki components a period four cycle. }
  \label{lambdaplane}
\end{figure}

 \begin{figure}
     \centering
  \includegraphics[width=5in]{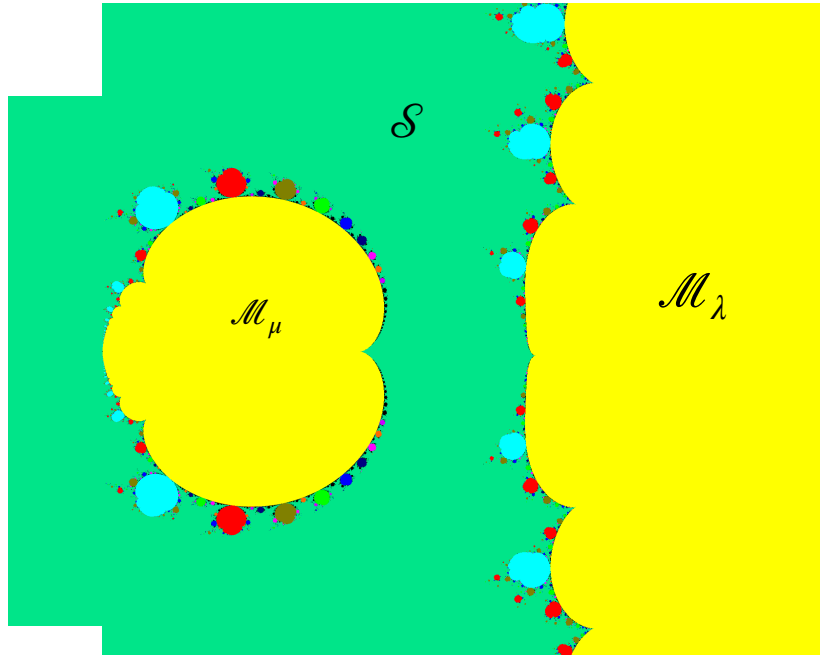}
  \caption{The $\la$-plane with a blow-up of the $\calm_{\mu}$ region. The color coding is the same as in figure~\ref{lambdaplane}.  }
  \label{lambda-muplanelabel.pdf}
\end{figure}

Recall that a function $f_{\la}$ is {\em hyperbolic} if the orbits of the asymptotic values remain bounded away from its Julia set.    The interior of $\calm_{\la}$  contains all the hyperbolic components $\Omega_p$ in which the orbit of $\la$ tends to an attracting periodic cycle of period $p$.  These are called {\em Shell components} because of their shape.   In \cite{FK} it is proved that each $\Omega_p$ is a universal covering of the punctured disk $\DD^*$ where the covering map is defined by the multiplier of the cycle.  This map extends continuously to the boundary and there is a standard bifurcation at each rational boundary point where the multiplier is of the form $e^{2p\pi i/q}$.  There is a unique  point $\la^* \in \partial\Omega_p$ such that as $\la \rightarrow \la^*$ along a path in $\Omega_p$, the multiplier of the cycle tends to zero.
  This boundary point is called the {\em virtual center} since it plays the role for $\Omega_p$ played by the center of a hyperbolic component of the Mandelbrot set for $z^2+c$.

For $p \neq 1$,   $\la^*$ is finite and has the additional property that $f_{\la^*}^{p-1}(\la^*)=\infty$; that is, $\la^*$ is a prepole.  Since the limit along a path approaching infinity from inside the asymptotic tract of an asymptotic value is the asymptotic value, $\la^*$,  its iterates, where the $p^{th}$ iterate is defined by this limit,  form a {\em virtual cycle}.  A parameter with this property is called a {\em virtual cycle parameter} and in \cite{FK} it is proved that every virtual center parameter on the boundary of a shell component is a virtual cycle parameter.

\begin{remark}\label{shellbdy} We note that all the rational boundary points and the virtual center of a shell component $\Omega_p$ are accessible boundary points   in the sense that the accumulation set of any path inside $\Omega_p$ tending to the point consists of a single point. \end{remark}

 There is a unique component $\Omega_1$ in which $\la$ is attracted to a fixed point $q_{\la} (\neq 0)$.   It is unbounded and, by abuse of language, we say its virtual center is infinity.

\section{The Model Map}\label{Model}
We fix $\rho=\rho_0, |\rho_0|<1$, once and for all for this paper and write $f_{\la}$ for $f_{\la, \rho_0}$.  The figures in the paper were made by taking $\rho_0=2/3$.

Let  $\la_0$ be a point in $\Omega_1 \subset \calm_{\la}$ such that at the fixed point $q_0=q(\la_0)$ of $f_{\la_0}$,  $f_{\la_0}'(q_0)=\rho_0$.  We remark that $\la_0$ is not uniquely defined because there is  a discrete set of preimages of $\rho_0$ under the multiplier map.    Choose one and set $Q(z)=f_{\la_0}(z)$; it is our {\em Model map}.\footnote{Note that conjugating by the affine map $w=z-q_0/2$ we obtain a map of the form $\alpha \tan w$ for some $\alpha$, $|\alpha|>1$, with fixed points at $\pm q_0/2$.   In particular, if $\rho_0$ is real, $\la_0$ can be chosen real,  and then the attracting basin of $q_0$ is a  half plane and the Julia set is a line.}
 Let $K_0$ be the immediate attracting basin of $q_0$ for $Q$.

   Since the orbits of the asymptotic values either tend to $q_0$ or $0$, the map $Q$ is hyperbolic.   Its Julia set $J_0$ is the common boundary of the attracting basins of $0$ and $q_0$;  it is well known to be equal to  the closure of both the set of  repelling periodic points of $Q$ and the set of prepoles of $Q$.   It will be convenient to identify both these sets with a space of sequences.   To that end we define

\begin{definition}\label{Sigma} Let  $\bj_n= j_1 j_{2} \ldots j_{i}\ldots j_n$,  $j_i \in \ZZ$ be a sequence of length $n$ whose entries are integers and set $\bJ_n= \{ \bj_n \}$;   let $\bjinf=j_1 j_{2} \ldots j_{i}\ldots j_n\ldots$,  $j_i \in \ZZ$ be a sequence of infinite length whose entries are integers and set $\bJinf= \{ \bjinf \}$.   Let $\bj$ denote an element of either $\bJ_n$ or $\bJinf$.  Define the sequence space
\[ \Sigma = \{ \bj \in  \bigcup_n \bJ_n  \cup \bJinf \cup \{ \infty \} \}  \]
and give it the standard sequence topology.  The shift map $\sigma$ defined by dropping $j_1$ defines a continuous self-map on $\Sigma$.
\end{definition}

 We will use $\Sigma$ to label the inverse branches of $Q$. (See Figure~\ref{ModelA}.)

 Let   $l$ be a curve  that joins   $\la_0$ to $\infty$   in $K_0$;  it is an asymptotic curve for $\la_0$.  Set      $l_*= \bigcup_{n \geq1} Q^n(l)$; this union is a continuous curve with endpoints $q_0$ and $\la_0$.   Although it does not matter in this section, we will see in the next section that we can choose $l$ so that it depends holomorphically on $\rho_0$.

  Since $\mu_0 \not\in \overline{K}_0 $
 we can define the inverse branches $R_j$ of $Q$, $j \in \ZZ$,   on $K^*= \overline{K}_0 \setminus   l$.
  Note that $Q$ is periodic with period $\pi i$ and so if  $q_j=q_0+ j \pi i$, then $Q(q_j)=q_0$;  it will be convenient to choose $l$ so that each $R_j$ is defined in a full  neighborhood of $q_0$ and is labeled so that $R_j(q_0)=q_j$.  Our figures are computed with $\rho_0=2/3$ so that $\la_0$ is real and $l$ is contained in the real axis.

  We can do this by setting
 \[  R_j(w) = \frac{1}{2}\Log \big(\frac{w/\mu_0 -1}{w/\la_0 -1} \big) + \pi i j  \]
 where $\Log$ stands for the principal branch of the logarithm.

Since $Q$ is periodic, we can speak of adjacent poles in $J_0$.   Let $l^+$ and $l^-$ mark the upper and lower edges of the curve $l$.  Then $R_j$ maps $K_0 \setminus l$ to a strip of width $\pi$ in $K_0$ bounded by the curves
$R_j(l^-)$ and $R_j(l^+)$, each of which joins a pole in $J_0$ to infinity; moreover, the poles are adjacent.  We label these poles $p_j$ and $p_{j+1}$ respectively.   Specifically,
  for $w \in K^*$,  we set
 \[ p_j =\lim_{w \to \infty \mbox{ asymptotic to  } l^-} R_j(w) = \frac{1}{2} \Log  \frac{\la_0}{\mu_0}  +  \pi i j \] and
  \[ p_{j+1} =\lim_{w \to \infty \mbox{  asymptotic to } l^+} R_j(w) = \frac{1}{2} \Log  \frac{\la_0}{\mu_0}  +  \pi i (j+1) \]
  Note that although each pole is defined by two limits, the index of the pole is well defined.  (See Figure~\ref{ModelA}.)

 Taking one sided limits   we define the images of $l$  under $R_j$ to be the lines
 \[ l_j= \{ R_j(w) \quad |\ w \in l^- \}. \]

\begin{enumerate}
\item  Set
 \[ R_{\bj_n}= R_{j_1j_{2} \ldots j_n}=R_{j_1} \circ R_{j_{2}}\circ \ldots \circ R_{j_n}.  \]
 \item  We can enumerate the preimages of the fixed points in the same fashion.   We set
 \[ q_{\bj_n}= q_{j_1j_{2} \ldots j_n}=R_{\bj_n}(q_0). \]
\item
We can enumerate the prepoles.    Set
\[ p_{\bj_n}= p_{j_1j_{2} \ldots j_n}=R_{\bj_n}(\infty). \]
This scheme enumerates all the prepoles by assigning each a unique element of $\Sigma$.  Note that
\[  Q(p_{j_1j_{2} \ldots j_n})=p_{j_{2} \ldots j_n} \]
so that $Q$ acts as a shift map $\sigma$ on the sequence.
\item
We can also enumerate the repelling periodic points in $J_0$.   If $z \in J_0$ and $Q^n(z)=z$, we can find branches $R_{j_k}$ such that
\[ z=R_{j_1j_{2} \ldots j_n}(z)= R_{j_1j_{2} \ldots j_n}R_{j_1j_{2} \ldots j_n}(z)= \ldots R_{j_1j_{2} \ldots j_n}(z). \]
Or
\[ z=R_{\bj_n}(z)= R_{\bj_n}R_{\bj_n}(z)= \ldots R_{\bj_n} R_{\bj_n}(z). \]
That is, we can associate the infinite repeating sequence $\bjinf=\bj_n \bj_n \ldots \bj_n \ldots $ to the point and set
\[ z= z_\bjinf =R_{\bj_n}(z_{\bjinf} )=Q^n(z_{\bjinf}).\]
Again $Q$ acts as a shift map and $Q^n$ leaves the infinite sequence invariant.

\end{enumerate}

\begin{prop}\label{Prop J0}  There is a homeomorphism from $\Sigma$ to $J_0$ such that for $\bj \in \Sigma$ and $z \in J_0$, $Q(z_{\bj} )= z_{\sigma(\bj)}$.
 \end{prop}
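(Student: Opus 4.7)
The plan is to construct $\Phi : \Sigma \to J_0$ explicitly using the inverse branches enumerated before the proposition, verify the conjugacy property on the nose, and then upgrade a continuous bijection to a homeomorphism by compactness. Concretely, I would set $\Phi(\infty) = \infty$, set $\Phi(\bj_n) = p_{\bj_n} = R_{\bj_n}(\infty)$ for finite sequences (so the prepoles are handled tautologically), and for an infinite sequence $\bjinf = j_1 j_2 \ldots$ define
\[
\Phi(\bjinf) \;=\; \lim_{n \to \infty} R_{j_1 j_2 \ldots j_n}(w_0),
\]
for some fixed basepoint $w_0$ (say $w_0 = q_0$). The conjugacy $Q(\Phi(\bj)) = \Phi(\sigma(\bj))$ then follows directly, since $Q \circ R_{j_1} = \mathrm{id}$ on the domain of $R_{j_1}$, and the shift strips the first symbol off of $\bjinf$.

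The first substantive step is to show the limit defining $\Phi(\bjinf)$ exists, is independent of $w_0$, and produces a point of $J_0$. Because $Q$ is hyperbolic ($J_Q \cap P_Q = \emptyset$), there is a neighborhood $U$ of $J_0$ in $\hat\CC\setminus P_Q$ on which $Q$ is uniformly expanding in the hyperbolic metric of $\hat\CC\setminus P_Q$, so each $R_j$ is a uniform contraction on $U \cap \mathrm{dom}(R_j)$. Nesting the compositions $R_{j_1 \ldots j_n}$ gives images with hyperbolic diameters tending to $0$, hence a unique limit point, and forward invariance of the tails under $Q$ places the limit in $J_0$. This same contraction estimate proves continuity of $\Phi$: if two sequences agree in the first $n$ entries, their $\Phi$-images lie in a common image of $R_{j_1 \ldots j_n}$, whose diameter goes to $0$ as $n \to \infty$; the point $\infty \in \Sigma$ is handled by noting that long finite sequences $\bj_n$ correspond to deep prepoles, which accumulate on $\infty$ in $J_0$.

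For surjectivity, every repelling periodic point is already encoded by a periodic infinite sequence (item (4) in the enumeration), and every prepole by a finite sequence; since $J_0$ equals the closure of either set, any remaining point is a limit of such symbolically labeled points, and an elementary diagonal/compactness argument extracts an infinite sequence whose $\Phi$-image is that limit. For injectivity, suppose $\bj \ne \bj'$ and differ first at index $k$. Then $\sigma^{k-1}(\bj)$ and $\sigma^{k-1}(\bj')$ start with distinct symbols, which means $Q^{k-1}\Phi(\bj)$ and $Q^{k-1}\Phi(\bj')$ lie in the interiors of disjoint fundamental strips bounded by the curves $R_j(l^\pm)$; pulling back, $\Phi(\bj) \ne \Phi(\bj')$. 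Finally, $\Sigma$ with its sequence topology (one-point compactified by $\infty$) is compact Hausdorff, $J_0$ is Hausdorff, so a continuous bijection is automatically a homeomorphism.

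The main obstacle I anticipate is the contraction estimate in the presence of the logarithmic singularity at $\infty$ and the cut $l$: the branches $R_j$ are defined only on $K^* = \overline{K}_0 \setminus l$, and near $\infty$ the derivatives of $R_j$ blow up, so one cannot simply quote a Euclidean Lipschitz bound. This is why the argument should be framed in the hyperbolic metric of $\hat\CC \setminus P_Q$, where hyperbolicity of $Q$ gives genuine uniform contraction and where the essential singularity is handled naturally as a limit point of prepoles corresponding to the symbol $\infty$. A secondary but less serious point is to verify that the choice of $l$ (which is only canonical up to homotopy in $K_0$) does not affect the identification: different choices conjugate the labeling by a bijection of $\ZZ$ on each coordinate, which is a homeomorphism of $\Sigma$ commuting with $\sigma$.
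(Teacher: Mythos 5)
The paper states Proposition~\ref{Prop J0} without any proof: it is offered as a summary of the preceding enumeration of prepoles and repelling periodic points, with the analytic content deferred to the references ([KK], [DK2]) and to the later tree argument in Section~3.1.3, where unique endpoints of infinite paths are extracted from hyperbolicity of $Q$ together with confinement to a compact region. So there is nothing in the text to compare your argument against line by line; your itinerary-map construction is the standard route and is consistent with what the paper does elsewhere. The conjugacy, the basepoint-independence, and the ``continuous bijection from a compact space to a Hausdorff space'' endgame are all fine.

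Two points need repair. First, your description of the topology of $\Sigma$ at the symbol $\infty$ is wrong: long finite sequences correspond to deep prepoles, and these are \emph{dense} in $J_0$ --- they do not accumulate only at $\infty$. The neighborhoods of $\infty\in\Sigma$ that make $\Phi$ continuous are the sets of sequences whose first entry satisfies $|j_1|>N$ (the strips $R_{j_1}(K^*)$ leave every compact subset of $\CC$ as $|j_1|\to\infty$), and likewise a finite sequence $\bj_n$ must be approached exactly by sequences $\bj_n k\ldots$ with $|k|\to\infty$, since $p_{\bj_n}=\lim_{|k|\to\infty}p_{\bj_n k}$. With this topology $\Sigma$ is compact and your final step goes through; with a product-type topology on $\ZZ^{\NN}$ it would not be compact and the argument would fail. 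Second, by the paper's own definition $\infty\in P_Q$ while also $\infty\in J_0$, so ``uniform expansion on a neighborhood of $J_0$ in $\hat\CC\setminus P_Q$'' cannot be taken literally; the contraction of the branches $R_j$ degenerates near the essential singularity. The correct statement is the one you gesture at in your last paragraph: shrinking hyperbolic diameter in the plane minus the (finite part of the) postsingular set, combined with the blow-up of that metric relative to the spherical metric near its boundary, forces spherical diameters to shrink. Carrying this out is also what legitimizes your injectivity step, since one must rule out an infinite itinerary limiting onto a prepole or onto $\infty$ (i.e., onto the shared boundary of two strips) before one may speak of ``interiors of disjoint fundamental strips.'' These are repairable technicalities rather than conceptual gaps, but they are precisely where the work lies.
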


In \cite{CJK2}, using results in \cite{FK},  we  proved that the finite sequence defining the prepole that is the  virtual cycle parameter of a component $\Omega_p$ of $\calm_{\la}$ uniquely characterizes that component.  Thus, the finite sequences in $\Sigma$ are in one to one correspondence with these boundary points of  $\calm_{\la}$.

\medskip

\subsection{A structure for $K_0$}

 There is a linearizing homeomorphism $\phi_0$, defined in a largest neighborhood $\Delta=O_{\la_0}$ of $q_0$ to a disk $\DD_0$ centered at the origin of radius $r_0$,  such that  $\phi_0(q_0)=0$, $\phi_0'(q_0)=1$.  Moreover,    for $z \in \Delta$, $\phi_0(Q(z))=\rho_0 \phi_0(z)$,  $\la_0 \in \partial \Delta$, $\phi_0$ extends continuously to the boundary and $r_0 = |\phi_0(\la_0)|$. The function $\log|\phi_0|$ is like a Green's function for $\Delta$. The preimages of the circles $|\zeta|=r$ in $\DD_0$ are the {\em level curves} and the preimages of the radii are the {\em gradient curves} in $\Delta$.  Thus the level of $\partial\Delta$ is $r_0$.

The map $\phi_0$ depends holomorphically on $\rho_0$.   Therefore a canonical choice for the curve $l$ used to define the branches $R_j$ above can be made by  using the polar coordinates of $\DD_0$.   For example, if $\rho$ is real, we define $l_*=\phi_0^{-1}(t)$, $t \in [0, r_0) \in \DD_0$ and let $l=R_0(l_*) \setminus l_*$.  If it is not, we adjust appropriately.

 Since $\mu_0 \not\in K_0$, the map $\phi_0$ can be extended by analytic continuation to all of $K_0$.  The curves $R_0^{-1}(\partial\Delta)$ have level $r_0/\rho_0$ and end at infinity.   We use the level and gradient curves to define a coordinate structure on $K_0$ as follows.  The coordinates are locally defined on preimages of $A_0=R_0(\Delta) \setminus ( \Delta \cup l)$ and $\mathbf{B_0}=R_0(\overline{\Delta} \setminus \{\la_0\})$.

 \subsubsection{Fundamental domains}

 \begin{definition}  We say that a region $D \subset K_0$,  with  interior $D_0$, is a {\em fundamental domain} for the action of $Q$ if
\begin{itemize}
\item     for any pair $(z_1, z_2) \in D_0, \, z_1 \neq z_2, \, , Q(z_1) \neq Q(z_2)$ and if
\item for some integer $n \in \ZZ$,  $\cup_{k \geq n}^{\infty} \overline{Q^k(D_0)} = \overline\Delta$.
\end{itemize}
\end{definition}
We now inductively define a set of fundamental domains that defines a partition of $K_0$ into fundamental domains.  Each fundamental domain $D_0$ will have boundary curves that are identified by $Q$.  In the process we give an enumeration scheme for these domains.   The curves referred to in the description are shown in  figure~{\ref{ModelTess}.

 \begin{figure}
  \centering
  \includegraphics[width=5in]{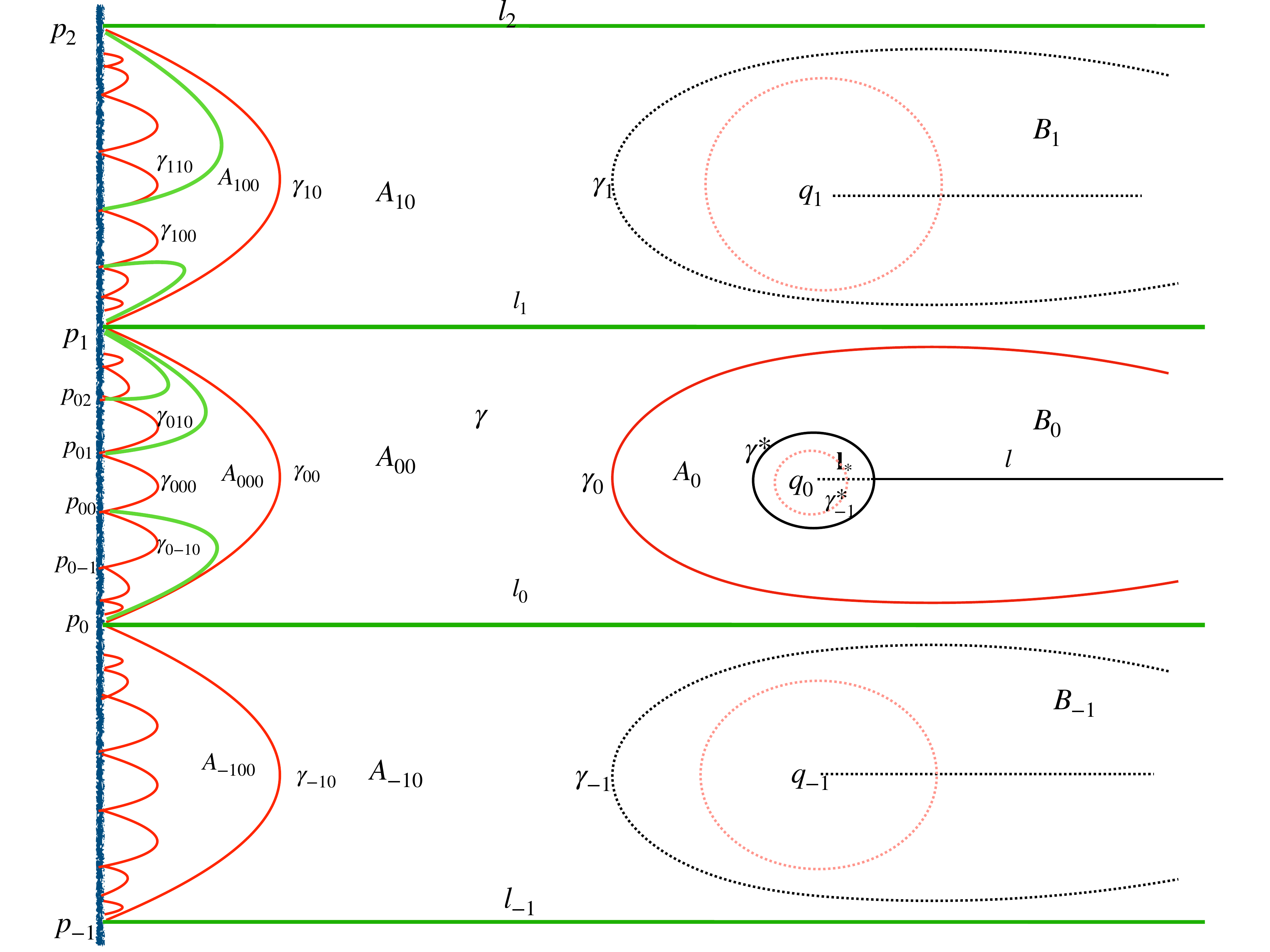}
  \caption{The model space shown with level (red)  and gradient (green) curves.  The dotted black curves are also level curves. }\label{ModelTess}
\end{figure}

\begin{enumerate}
\item  Let $\gamma_0^*=\gamma^*= \partial \Delta$.  Then $Q(\Delta) \subset \Delta$ and $\Delta$ contains the positive orbit of $\la_0$.   Set   $\gamma_{-n}^* = Q^n( \gamma^*)$,  $n=1, \ldots, \infty$.  Note that $R_0^n(\gamma_{-n}^*)= \gamma^*$.  These curves are nested  about $q_0$ in $\Delta$.  Any annulus $A_{-n}^*$ in $\Delta$  bounded by  $\gamma_{-n}^*$ and $\gamma_{-n-1}^*$ is a fundamental domain for $Q$.   In figure~\ref{ModelTess}, $\gamma^*$ is drawn in black and $\gamma^*_{-1}$ is drawn as a dotted red curve.  The annulus $A_0^*$ between them is a fundamental domain.   
\item
 For all $j \in \ZZ$, set $\mathbf{B}_j=R_j(\overline{\Delta} \setminus \{\la_0\})$.   In figure~\ref{ModelTess}, we see that  the domains $\mathbf{B}_j$, $j \neq 0$, are simply connected and  are bounded by dotted doubly infinite black curves $\gamma_j=R_j(\gamma^* \setminus \{\la_0\})$;   the dotted red curves inside the $\mathbf{B}_j$ are $R_j(\gamma_{-1}^*)$; each $\mathbf{B}_j$ contains the preimage of the fixed point $q_j$.  We single out the boundary curve of the domain $\mathbf{B}_0$, $\gamma_0=R_0(\gamma^* \setminus \{\la_0\})$ and color it red because, as we will see,      its preimages are somewhat different from the preimages of the other $\gamma_j$'s.    Note that $\mathbf{B}_0$ has a puncture at $\la_0$.  The annulus $A_0 = \mathbf{B}_0 \setminus  ( \overline{\Delta} \setminus \{\la_0\})$ between $\gamma_0$ and $\gamma^*$ is a fundamental domain, and we will be particularly interested in its preimages.  Note that it contains the line $l$.

\item Next, we denote the preimages of $A_0$ by $A_{j0}= R_j(A_0)$, $j \in \ZZ$.  To see what they look like, we look at the preimages of its boundary curves. 
First consider the boundary curve $\gamma^*$:  its preimages are the curves  $\gamma_j=R_j(\gamma^*)$.  The other boundary curve is $\gamma_0$: its preimages are the curves $\gamma_{j0}=R_j(\gamma_0)$; each of these curves, drawn in red in figure~\ref{ModelTess}, joins the pair of poles $(p_j, p_{j+1})$.   Now 
consider the preimages of the line $l$ inside $A_0$:  these are the lines $l_j=R_j(l^{-})$  and $l_{j+1}=R_j(l^{+})=R_{j+1}(l^{-})$ that join the poles $p_j$ and $p_{j+1}$ to infinity; they are drawn in green in figure~\ref{ModelTess}.     
Thus we see that each $A_{j0}$ is bounded by four curves, the red curves $\gamma_j$, $\gamma_{j0}$ and the green curves  $l_j$ and $l_{j+1}$.

Each $A_{j0}$ has three vertices on $\partial{K_0}$,  the  poles $p_j$ and $p_{j+1}$ where the lines $l_j$ and $l_{j+1}$ meet 
 the ends of $\gamma_{j0}$,  and infinity,  
the common endpoint of the doubly infinite $\gamma_{j}$ and an endpoint of each of the lines $l_j$ and $l_{j+1}$.

 Note that if we consider a pole as a prepole of order one, and  infinity as a ``prepole'' of order zero, the red curves labeled  by $\gamma_{i}$ join a prepole  to a prepole of the same order, whereas  the green curves labeled by $l_{i}$ join a prepole of order 0 to a prepole of order $1$.

 For later use  we set $\mathbf{A}_0=\overline{ \cup_j A_{j0}}$;   it is a simply connected domain.

 \item  We inductively define the domains $A_{j_{1} \ldots j_{n-1}0}=R_{j_1}(A_{j_{2} \ldots j_{n-1}0})$.  Their boundaries contain two pairs of  boundary curves.  The pair drawn in  red, consists of  $\gamma_{j_{1} \ldots j_{n-1} 0}$ which joins adjacent prepoles of order $n-1$, and  $\gamma_{j_{2} \ldots j_{n-1}0}$ which joins  a prepole of order $n-2$ to itself if $j_{n-1} \neq 0$ and joins two adjacent prepoles of order $n-2$ if $j_{n-1}=0$.  The other pair, drawn in green,  consists of  $l_{j_{1} \ldots j_{n-2} j_{n-1}0}$ and $l_{(j_{1}+1)j_{2} \ldots j_{n-2} j_{n-1}0}$, each joining a prepole of order $n-2$ to a prepole of order $n-1$.  If $j_{n-1}=0$, all four boundary prepoles are   distinct; if not, the prepoles of order $n-1$ are the same.  
 
 The domains $A_{000}$ and $A_{100}$ are shown in figure~\ref{ModelTess}.  Although not all the curves are labeled, the red   curves are preimages of $\gamma_0$ and the green curves are preimages of $l$.

Again, for later  use,    we define the simply connected domains
\[ \mathbf{A}_{ j_{1} \ldots j_{n-1}0}=R_{j_1}(\mathbf{A}_{j_{2} \ldots j_{n-1}0} ) \]
for all $n>1$. 
 
 \item   Unlike the preimages of $\gamma_0$ which join two adjacent poles, the curves  $\gamma_{ji}=R_j(\gamma_i)$, $i \neq 0$ are curves that join a pole  to itself.  This is  because of the way we have defined the $R_i$.   The pole $p_j$ is one endpoint of each the curves $\gamma_{(j-1) 0}$ and $\gamma_{j0}$.  The curves $\gamma_{ji}$,  $i>0$, are loops that come in to $p_j$, tangent to, and under $\gamma_{j0}$ while the curves $\gamma_{(j-1) i}$, $i<0$,   are loops that come in to $p_j$, tangent to, and under $\gamma_{(j-1)0}$.  Thus, in the preimage,  $R_i(K_0)$, the curves $\gamma_{ji}$ are tangent to the pole $p_{i}$ for $j \geq 0$ and tangent to the pole $p_{i+1}$ for $j \leq 0$.   
 
 Therefore the loops $\gamma_{ji}$, $j \neq 0$, bound simply connected domains $\mathbf{B}_{ji}=R_j(\mathbf{B}_i)$.   They are tesselated by the fundamental annuli $B_{ji}^k=R_{ji}(A_{-k}^*)$,   $k \geq 0$ and each  $\mathbf{B}_{ji}$ contains a curve that is a preimage of the line $l_*$.  The  disjoint domains $\mathbf{B}_{ji}$  form an infinite cluster  at each pole.   For considerations of space and clarity, we haven't included them in the figure.    
 
 Note that the fundamental domains $B_{ji}^k$ are analogous to the fundamental domains $A_{j0}$ whereas the unions of fundamental domains $\mathbf{B}_{ji}$ are analogous to the unions of fundamental domains $\mathbf{A}_{\bj_{n-1}0}$.   We will preserve this analogy and notation in the inductive definition below.

 \item We inductively define the domains  $\mathbf{B}_{\bj_n}=R_{j}(\mathbf{B}_{\bj_{n-1}})$.  Thus $\bj_n$ has the form $j\bj_{n-2}i$, $i \neq 0$.  
For each  $j$, if $i>0$,  the $\mathbf{B}_{\bj_n}$ cluster at the prepole $p_{ \bj_{n-2} i}$ and  if $i<0$ they cluster at the prepole $p_{ \bj_{n-2}(i+1)}$.  Each has an outer biinfinite boundary curve, $\gamma_{\bj_n} $, both of whose endpoints are at the same pole and an interior  curve $l_{\bj_n}$, joining the pole $p_{\bj_{n-2}i}$ to $q_{\bj_n}$.   Each of the domains  $\mathbf{B}_{\bj_n}$ is a union of fundamental domains $B_{\bj_n}^k$.   
  
 \item   We remark that the admissibility condition for the $\mathbf{A}_{\bj_n}$ is that the rightmost entry in the sequence  $\bj_n$  is  always zero while the condition for the $\mathbf{B}_{\bj_n}$ is that the rightmost entry is never zero.  The geometry of these regions is different.   The 
 $\mathbf{A}_{\bj_n}$  have infinitely many vertices at infinitely many distinct prepoles. Each interior fundamental domain has vertices  at three or four distinct prepoles. The 
 $\mathbf{B}_{\bj_n}$, on the other hand,  have only one vertex at a single prepole, or infinity if $n=1$, where all the boundary curves meet.  The fundamental domains contained in them are annuli, the outermost of which has a prepole on its boundary.  
 \end{enumerate}

\subsubsection{ The Coordinates }\label{modelcoords}


 We  extend the map $\phi_0$ defined above to  all of $K_0$ by analytic continuation.  Continuing  across the boundary of $\Delta$ we have
$$\phi_0^{-1} ( \{ \zeta \, | \, r_0  \leq  |\zeta |  \leq r_0/\rho \})=\overline{A_0}.$$
We use the closure here to include the boundary curves.   We extend the map to all of $K_0 \setminus \{ \la_0\}$ in the obvious way: if $z$ is in a fundamental domain $Q^{-n}(A_0)$ for any $n$ inverse branches, we set $\rho^n\phi_0(z)=\phi_0(Q^n(z))$.

We define  coordinates for $K_0$ in terms of local coordinates in the fundamental domains described above.
\begin{enumerate}
\item The curves $\phi_0^{-1}(|\zeta|=r)$  are the {\em level curves of level $r$} in $K_0$.   The level of $\gamma^*$ is $r_0$ and the level of $\gamma_j$ is $r_0/\rho$.

 The outer boundary of   each $\mathbf{B}_{{\mathbf j}_n}$ has level $r_0/\rho^n$;  passing through the interior fundamental domains, the levels decrease to zero by powers of $\rho^k$.  
In each $\mathbf{A}_{{\mathbf j}_n}$ the levels  go from $r_0/\rho^{n-1}$ to  $r_0/ \rho^n$ and the same is true in the interior fundamental domains. 

\item
   The {\em gradient curves} are preimages of the radii  $\theta= \theta_0$  for a fixed $\theta_0 \in \RR$ under the map $\phi_0^{-1}(r e^{i \theta})$.  For example, each of the fundamental domains $A_{{\mathbf j}_n}$ has four boundary curves; one pair of opposite curves,  labeled with  $\gamma$'s and shown in red in figure~\ref{ModelTess}, are level curves and the other pair of opposite curves,  labeled with  $l$'s and shown in green in figure~\ref{ModelTess},  are gradient curves  along which the level rises from  $r_0/\rho^{n-1}$ to level $r_0/\rho^{n}$.

 \item The level and gradient curves  define a set of {\em local coordinates  in  $K_0 $}:   the preimages of the circles and radii in $\DD_0$ under the extension of $\phi_0$ pull back to each fundamental domain.
 We denote the coordinates of the point $z \in K_0$ by
 \[ z=(X_{{\mathbf j}_n},r, \theta+\pi(n-1)) \]
  where $ X_{{\mathbf j}_n}=\overline{A_{{\mathbf j}_n}}$ if $\bj_n = j_1 \ldots j_{n-1}0$ and $X_{\bj_n}= \overline{B_{{\mathbf j}_n}^k}$ if $\bj_n = j_1 \ldots j_{n-1}j$, $j \neq 0$.  Because $k$ can be read off from  $ r \in [0, \infty)$, it is enough to write $X_{\bj_n}$.   We let $  \theta \in [-\pi, \pi)  $ and 
  note that because the inverse branches are defined as one sided  limits on the boundaries of the fundamental domains, the $\theta$ coordinate varies continuously across common boundaries.
  \end{enumerate}

\begin{thm} \label{coords in K0} Every point $z \in K_0$,  $z \neq \la_0$, is in either a unique $A_{\bj_n}$ or a unique $B_{\bj_n}^k$ or on the boundary of two such domains: either the common  boundary of some $A_{\bj_n}$ and some $A_{\bj_{n+1}}$  where $\bj_{n+1}=\bj_n 0$,  or the common boundary of an $A_{\bj_n}$  and a $B_{\bj_{n-1}}^0$ where
$\bj_{n}=\bj_{n-1}j_{n}$, $j_n \neq 0$ or the common boundary of a $B_{\bj_{n-1}}^k$ and a $B_{\bj_{n-1}}^{k+1}$.
 \end{thm}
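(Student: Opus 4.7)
The plan is to track each $z \in K_0 \setminus \{\la_0\}$ by its $Q$-itinerary, exploiting the extended linearizing coordinate $\phi_0$ and the fact that $K_0 = \bigcup_{n \geq 0} Q^{-n}(\overline\Delta)$.

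First I would establish existence and interior uniqueness simultaneously. Because $|\phi_0(Q^n(z))| = |\rho_0|^n |\phi_0(z)|$, every $z \in K_0$ has a finite first-entry time $n(z) = \min\{n \geq 0 : Q^n(z) \in \overline\Delta\}$. The inverse branches $R_j$ have mutually disjoint images, since they differ by the translations $\pi i j$, so $Q^{-1}(\Delta) = \bigcup_j \mathbf{B}_j$ is a disjoint union. Iterating, for each $1 \leq k \leq n(z)$ there is a unique $j_k$ with $Q^{k-1}(z) \in \mathbf{B}_{j_k}$, which assigns $z$ a finite itinerary $\bj = j_1 \cdots j_{n(z)}$ that is well-defined whenever $z$ lies in the interior of a fundamental domain. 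If the last entry of $\bj$ is $0$, then $Q^{n(z)-1}(z) \in A_0 = \mathbf{B}_0 \setminus \overline\Delta$ and pulling back by $R_{j_1 \cdots j_{n(z)-1}}$ places $z$ in the unique $A_\bj$; if the last entry is nonzero, then $Q^{n(z)}(z) \in \overline\Delta$ lies in a unique annulus $A_{-k}^*$ and pulling back places $z$ in the unique $B_\bj^k$. The case $z \in \overline\Delta$ is analogous, with the nested $A_{-k}^*$ themselves serving as base domains. Uniqueness then follows because each iterated inverse branch $R_\bj$ is injective on $K^*$, so preimages of the disjoint base domains under distinct sequences have disjoint interiors.

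For the boundary identifications, the boundary of each base domain consists of level-curve pieces from $\gamma^*$, $\gamma_0$, and the $\gamma_{-k}^*$, together with, for $A_0$, the two sides of the gradient curve $l$. Applying the inverse branches transports these to the boundary curves of the $A_\bj$'s and $B_\bj^k$'s, and the three cases in the theorem correspond exactly to the three types of shared base boundary. The level curve $\gamma_{\bj_n 0} = R_{\bj_n}(\gamma_0)$ separates $A_{\bj_n}$ from $A_{\bj_n 0}$; a gradient curve $l_{\bj_n}$ (preimage of $l$ at depth $n$) separates an $A_{\bj_n}$ with last entry $0$ from the adjacent $B_{\bj_{n-1}}^0$-cluster whose label ends in a nonzero digit; and a level curve inside $\mathbf{B}_{\bj_{n-1}}$ separates $B_{\bj_{n-1}}^k$ from $B_{\bj_{n-1}}^{k+1}$.

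The main obstacle is verifying exhaustiveness — that no point lies on the boundary of three or more fundamental domains. The prepoles $p_\bj$ are vertices at which infinitely many $\mathbf{B}$-clusters accumulate, but they lie on $\partial K_0 = J_0$ and so are outside $K_0$; within $K_0$, the only potential accumulation locus of infinitely many fundamental domains would be $\la_0$, which is the explicitly excluded point. The remaining check that interior level curves and gradient curves in the tessellation meet only at such excluded (pre)poles, so every non-excluded boundary point lies on exactly two fundamental domains, follows by transporting the corresponding transversality fact from the base domains $A_0$ and $A_{-k}^*$ through the injective inverse branches.
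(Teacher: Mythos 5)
Your proof is correct and follows essentially the same route as the paper's: iterate $z$ forward until it first enters $\overline\Delta$, use the disjointness of the inverse branches $R_j$ to read off a unique itinerary, pull back the base domains along that itinerary, and resolve boundary points via the identifications of the base boundary curves. You supply more detail than the paper does on the trichotomy of shared boundaries and on why no point of $K_0\setminus\{\la_0\}$ can lie on three or more fundamental domains, but the underlying argument is the same.
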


 \begin{proof}  Let $z \in K_0$,  $z \neq \la_0$.  Since  every such $z$ is attracted to $q_0$  and has infinitely many preimages, there is an $m$ such that $Q^m(z) \in \Delta \cup (\partial\Delta \setminus \{\la_0 \})$.   If $\zeta=Q^m(z) \in \Delta$, there is a unique set of preimages $R_j$ such that $z=R_{{\mathbf j}_m}(\zeta)$.  If $\zeta \in  (\partial\Delta \setminus \{\la_0 \})$ the  $\theta$ coordinate is defined as a one sided limit for each preimage, and the limits agree on the boundary curves.
 \end{proof}

 \subsubsection{The Tree in $K_0$}\label{combtree}

 In this section we use the domains $\mathbf{A}_{\bj_n}$ which are unions of the fundamental domains $A_{\bj_n 0} $ to define a   tree in  $K_0$.   For readability, we will say a level curve of level $r_0/\rho^{n}$ has level $n$.   
 
 Among the boundary curves of the union $\mathbf{A}_{\bj_n}$, there is a distinguished boundary curve of level $n$; it is a  preimage of $\gamma_0$ under the map $R_{\bj_n}$.  The remaining infinitely many boundary curves of the union have level ${n+1}$ and are images under the maps $R_{\bj_nj}$.   Note that the non-distinguished boundary curves  of $\mathbf{A}_{\bj_n}$  of level $n$ are also boundaries of some $\mathbf{B}_{\bj_n}$.    At level $n=1$ we will fix a root node on $\gamma_0$.   On each    $\gamma_{\bj_n0}$ of level $n$,  $n>1$, we will define  the preimage of the root to be a node of the tree. We call these interior nodes.   We will also put  nodes at every prepole of every order on the tree.

The children of the interior node of level $n$ are the interior nodes of level $n+1$,  a prepole of level $n$ and infinitely many prepoles of level $n+1$.   We will define branches of the tree that connect a node to each of its children.  The nodes that are prepoles have no children and are called leaves of the tree.  Each interior node has only one parent and each prepole node has two parents.    Paths through the tree start at the root and consist of a connected set of branches joining nodes in the tree.  Some will be finite, ending at  leaves and others will be infinite.  
 
%

  \begin{figure}
  \centering
  \includegraphics[width=5in]{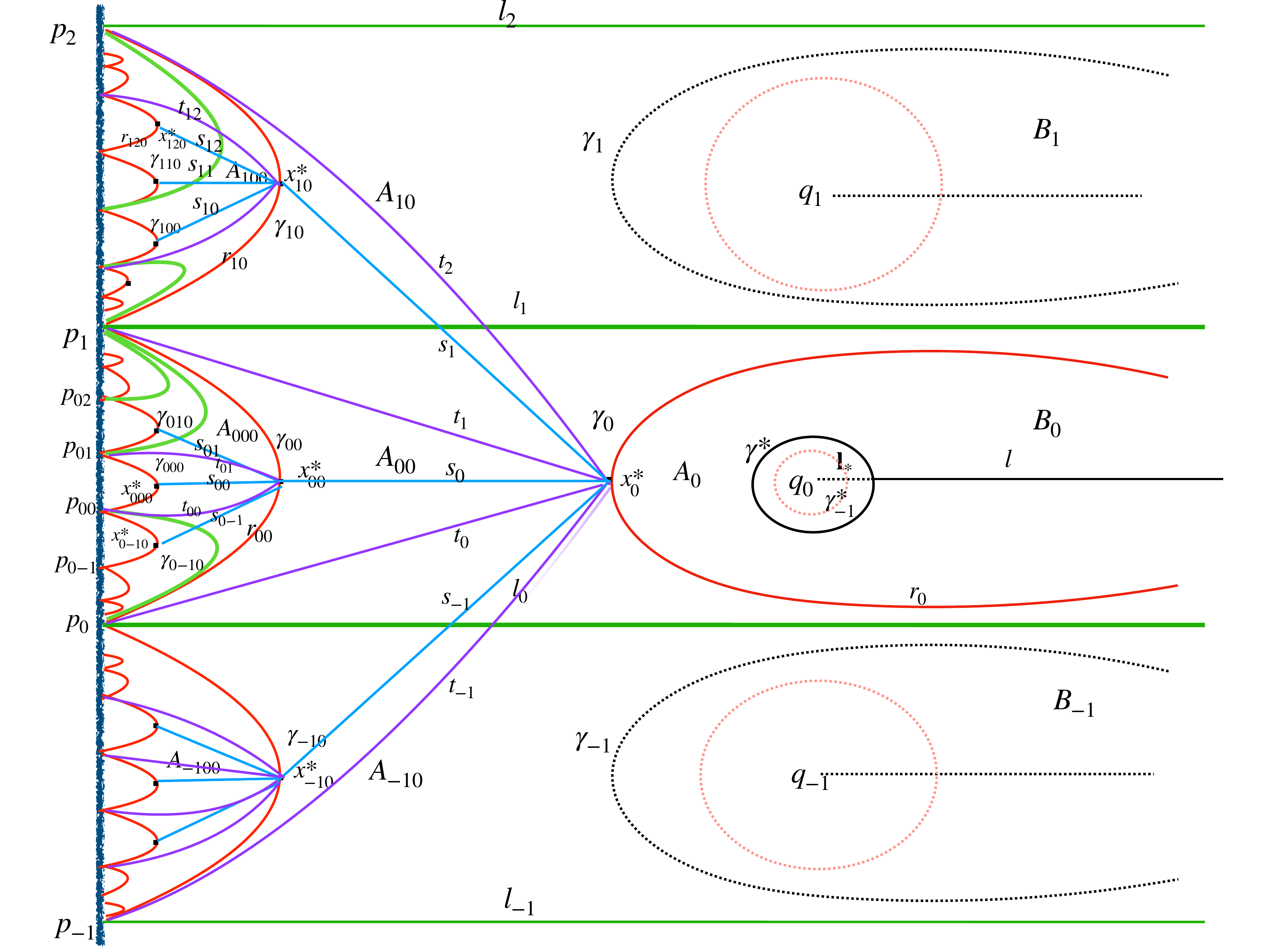}
  \caption{The fundamental domains of the model space with the tree. }\label{ModelA}
\end{figure}

 For the explicit construction of the tree, (see  figure~\ref{ModelA}),   
  fix a point  $x_0^*$  on  the boundary curve $\gamma_0$ of the domain $A_0$.  It has level $1$ and is the  first node, or root of the tree.      For each  $n>1$,  and each $\bj_n=\bj_{n-1}0$, the interior nodes of level $n$ are defined as the points $x_{\bj_{n-1}0}^*=R_{\bj_n}(x_0^*)$.    The prepoles of all orders are leaves of the tree. 
  
  The first step of the construction is to define a  tree $T^*$.    Join the root $x_0^*$ to each of the nodes $x_{j 0}^*$, $j \in \ZZ$, by a branch $s_{j}$ and join  it to the pole  $p_{j}$ by a branch $t_{j}$.    Also define the segment  $r_0$ of $\gamma_0$ from $x_0^*$ to infinity,   asymptotic to the line $l_0$ as a branch   joining the root to the ``prepole'' infinity.  The root with these branches connected to the leaves 
  is a small tree $T^*$ contained in $\mathbf{A}_0$.   Since the domain  $\mathbf{A}_0$ admits a hyperbolic metric,  we may choose $s_{j}$   as  geodesics in the hyperbolic metric and  take the $t_j$ and $r_0$ along the level curves.  The hyperbolic lengths of the $s_j$  go to infinity with $|j|$ while the lengths of the $t_j$ and $r_0$ are always infinite. 

Now define a small tree at each node $x_{\bj_{n}0}^*$,  and  contained in $\mathbf{A}_{\bj_n 0}$,  by $T_{\bj_n}^*=R_{\bj_n}(T^*)$.  Note  that as often happens,  the spatial relationship of the nodes in the tree is dual to the dynamic relationship.  The nodes $x^*_{j_1 k 0}$, $k \in \ZZ$, are children of the node  $x^*_{{j_1}0}$, whereas the nodes $x^*_{{k j_1}0}$ which are preimage of $x_{j_10}$ are not.   Thus the children of the parent node $x_{\bj_{n}0}^*$ are the nodes $x_{\bj_{n} k0}^*$ and not the nodes $x_{k \bj_{n}0}^*$.     The tree  has branches $s_{\bj_n }$ joining  the parent node $x_{\bj_{n}0}^*$ to its interior children $x^*_{j_1 k 0}$, a branch $r_{\bj_n0}$ joining it to the prepole $p_{\bj_n}$  and branches $t_{\bj_n k }$   joining it to the  prepoles $p_{\bj_{n}k}$.    
  Because the $R_j$ are biholomorphic, the   hyperbolic lengths of   the branches are preserved.  

  Finally, joining all these small trees together,  we obtain the full tree
\[ T_{\infty}^*= \bigcup_n \cup_{\bj_n}(T_{\bj_n}). \]

If ${\bjinf}$ is periodic with period $n$, there is a sequence $\bj_n= j_1 \ldots j_n$ such that ${\bjinf}=\bj_n\bj_n\bj_n\ldots $.  By  the periodicity,   $j_{1+n}=j_1$, so that  the node  $x^*_{j_1\bj_n 0}$ is a direct descendant of the node   $x^*_{j_1 0}$. 
 This means that there is a path $\tau_{\bj_n}=s_{j_1} \cup s_{j_1 j_2} \cup \ldots s_{j_1 \ldots j_{n-1}}$ from the root to the node.   It has finite hyperbolic length.    
 Note, however, as we remarked above,  applying $R_{j_i}$ to successively obtain the nodes $x^*_{j_n 0}, x^*_{ j_{n-1} j_n 0},   \ldots,   x^*_{j_2 \ldots j_{n-1} j_{n} 0}$,  yields a collection of nodes that  are not direct descendants of  $x^*_{j_1 0}$.

The path 
\[  \tau_{\bj_n \bj_n}=R_{\bj_n}(\tau_{\bj_n})= R_{\bj_n}(s_{j_1}) \cup R_{\bj_n}(s_{j_1 j_2}) \cup \ldots R_{\bj_n}( s_{j_1 \ldots j_{n-1}}) \] 
joins $x^*_{\bj_n 0}$  to $x^*_{\bj_n\bj_n 0}$ and has the same hyperbolic length as $\tau_{\bj_n}$.
Iterating,  we obtain a path $\tau_{\bjinf}$ of infinite length that is invariant under $R_{\bj_n}$.

 All but the first segment of this path are separated from the root by the level curve $\gamma_{j_10}$ so any accumulation point of the path lies in the Julia set between the poles $p_{j_1}$ and $p_{j_1 +1}$.  Thus  the path  
  remains inside a compact domain bounded by $\gamma_{j_1 0}$ and the Julia set boundary of $K_0$.   This implies that the Euclidean lengths of subpaths making up $\tau_{\bj}$ tend to zero, and since $Q$ is hyperbolic, that the full path has  a unique endpoint.

 Thus if $\bjinf$ is periodic of period $n$,   the path in the tree with   unique endpoint $z_{\bjinf}$  is invariant under $Q^n$ and  corresponds to a repelling periodic point of $Q$ in $J_0$ whose  combinatorics agree with those defined in Proposition~\ref{Prop J0}.

If $\bjinf$ is preperiodic, $\bj=\bj_m\bj_n\bj_n\bj_n  \ldots$, we can construct a periodic  infinite subpath of $\tau_{\bjinf}$ beginning at $x_{\bj_m}^*=R_{\bj_m}(x_0^*)$, instead of the root, so that it is invariant under $R_{\bj_n}$.   The argument above shows it also has a unique preperiodic endpoint.


 \section{The Shift Locus}\label{shift}

 In the shift locus $\cals$, both asymptotic values are attracted to the origin.    If $\la \in \cals$,  we can define a linearizing map $\phi_{\la}$ from the attractive basin $A_{\la}$ of the origin to the disk $\DD_0$ that is injective on a neighborhood  $O_{\la}$ of the origin.   Neither  $\la$ nor $\mu$ lies in $O_{\la}$ and one or both lie on $\partial O_{\la}$.

 We divide $\cals$ into disjoint subsets as follows
\[  \cals_{\la}^0 = \{ \la \in  \cals \, | \, \, \mu \in \partial O_{\la}, \, \la \not\in \partial O_{\la} \}, \]
 \[  \cals_{\mu}^0 = \{ \la \in \cals \, | \, \,  \la \in \partial O_{\la}, \,  \mu \not\in \partial O_{\la}\},  \mbox{  and } \]
 \[ \cals_* = \{ \la \in  \cals \, | \, \, \la \in \partial O_{\la}, \, \mu \in \partial O_{\la}\}, \]
  We normalize the map so
   that if $z_0$ equal to the asymptotic value on the boundary and   $z \in O_{\la}$ then
 \[ \phi_{\la}(0)=0,  \phi_{\la}(z_0)=\phi_0(\la_0)=r_0,
  \mbox{ and  }                      \phi_{\la}(f_{\la}(z)) = \rho\phi_{\la}(z). \]
  Note that this normalization agrees with our normalization of $\phi_0$, the linearizing map for the model $Q$;  that is, both map the asymptotic value to the point $r_0$ on the real axis.  

We restrict our discussion here to $\cals_{\la}^0$ but there is a comparable discussion for $\cals_{\mu}^0$.

\subsection{Coordinates in the dynamic plane $A_{\la}$}
 The scheme we defined above for tesselating the attracting basin $K_0$ of $Q=f_{\la_0}$ works equally well  in a subdomain of  the attractive basin of zero, $A_{\la}$, for $\la \in \cals_{\la}^0$.
 
   \begin{thm}\label{la basin coords}  Given $\la \in \cals_{\la}^0$,  there is a   coordinate structure defined on a subdomain $\hat{A}_{\la}$ of $A_{\la}$.  More precisely,  there is an integer $N$ such that the basin of the origin of $f_{\la}$, $A_{\la}$, contains a subdomain $\hat{A}_{\la}$ tesselated by fundamental domains $\alpha_{\la,{\mathbf j}_{n-1}0}$ and $\beta_{\la,{\mathbf j}_{n-1}i}^k$, $i \neq 0$, $k  \geq 0$ and $n \leq N$.   The boundary curves of these regions are level  and gradient curves defined by  using a normalized linearizing function $\phi_{\la}$ near the origin, and pulling back a radius and circle containing $\phi_{\la}(\mu)$ to $A_{\la}$.  The geometric properties of the $\alpha_{\la,{\mathbf j}_{n-1}0}$ and $\beta_{\la,{\mathbf j}_{n-1}i}^k$ are analogous to those of the  fundamental domains $A_{\bj_{n-1}0,\la}$ and $B_{\bj_{n-1}i,\la}^k$ domains in $K_0$.  The coordinates in $\hat{A}_{\la}$ are $(\sigma_{\bj_n}, r, \theta +\pi(n-1))$ where $r \in [0, \infty), \theta \in [-\pi, \pi)$ and   $\sigma_{\bj_n}=\sigma_{\bj_{n-1}i}$ stands for $\alpha_{\la,{\mathbf j}_{n-1}0}$ if $i=0$ and $\beta_{\la,{\mathbf j}_{n-1}i}^k$ for some $k$ depending on $r$ for $i \neq 0$. 
 \end{thm}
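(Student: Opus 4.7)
The plan is to mirror, step for step, the construction in Section~\ref{Model} for the model $Q$, with $0$ playing the role of $q_0$, $\mu$ playing the role of $\la_0$ (the asymptotic value on the boundary of the linearizing disk), and $\la$ playing the role of $\mu_0$ (the asymptotic value off the boundary). The new feature, absent in the model, is that $\la$ itself lies in $A_\la$; this forces the iterated pullback to terminate at some finite depth $N$.

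First, I would construct the normalized linearizing map $\phi_\la$ on the maximal injectivity disk $O_\la$ around $0$, with $\phi_\la(\mu)=r_0$, and pull back the polar coordinate structure of $\DD_0 = \phi_\la(O_\la)$. Setting $\gamma^*_\la = \partial O_\la$ gives a distinguished level curve through $\mu$. Next, choose an asymptotic path $l_\la$ from $\mu$ to $\infty$ that matches the analogous choice of $l$ in the model (so that the canonical dependence on the radial coordinate in $\DD_0$ is preserved). Cut the plane along $l_\la$ and define inverse branches $R_{\la, j}$, $j \in \ZZ$, of $f_\la$ by the same Nevanlinna/logarithm formula as for the model, with $\la$ and $\mu$ now playing the roles of $\mu_0$ and $\la_0$.

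Second, I would define the fundamental annulus $\alpha_{\la, 0}$ bounded by $\gamma^*_\la$ and $R_{\la, 0}(\gamma^*_\la)$ (minus the cut), and then inductively set
\[ \alpha_{\la, \bj_{n-1}0} = R_{\la, j_1}\bigl(\alpha_{\la, j_2 \ldots j_{n-1}0}\bigr), \qquad \beta_{\la, \bj_{n-1}i}^k = R_{\la, j_1}\bigl(\beta_{\la, j_2 \ldots j_{n-1}i}^k\bigr), \]
mimicking the model construction of the $A_{\bj_{n-1}0}$ and $B_{\bj_{n-1}i}^k$. As in the model, each boundary curve is either a pullback of $\partial \DD_0$ (a level curve) or a pullback of a radius of $\DD_0$ together with an edge of $l_\la$ (a gradient curve), and the combinatorial structure at prepoles (clustering, loops tangent to a single pole) is determined by the inverse-branch formula.

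Third, since $\la \in A_\la$ with $|\rho|<1$, there is a minimal $m \geq 1$ with $f_\la^m(\la) \in O_\la$; set $N=m$. For $n \leq N$, no composition $R_{\la, j_1}\circ\cdots\circ R_{\la, j_n}$ is required to invert $f_\la$ across the singular value $\la$, so each fundamental domain is well-defined, simply connected, and distinct sequences $\bj_n$ yield disjoint domains. The set $\hat{A}_\la$ is the union of all fundamental domains of depth at most $N$ together with their shared boundary curves. The coordinates $(\sigma_{\bj_n}, r, \theta + \pi(n-1))$ are then defined by pulling back the polar coordinates on $\DD_0$ through the appropriate composition of inverse branches, exactly as in Section~\ref{modelcoords}. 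That every point of $\hat{A}_\la$ lies in a unique open fundamental domain or on the common boundary of two, with $\theta$ continuous across boundaries, then follows as in Theorem~\ref{coords in K0}.

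The main obstacle is to pin down the stopping criterion cleanly and to verify that the tesselation up to depth $N$ is consistent. One must show that the backward orbit of $\la$ under the chosen branches first produces a preimage in a fundamental domain at a uniform depth $N$, rather than obstructing the construction earlier along some particular chain, and that the resulting $\hat{A}_\la$ is open, connected, and admits the stated coordinate structure. Both facts rest on the observation that $\phi_\la$ extends by analytic continuation along any curve in $A_\la$ avoiding the forward orbits of the singular values, while the branches $R_{\la, j}$ are single-valued on $\overline{A_\la}\setminus l_\la$; consequently the only chain affected by the presence of $\la$ is the itinerary of $\la$ itself, which terminates at depth $N$.
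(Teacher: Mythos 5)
Your proposal is correct and arrives at the same tesselation by essentially the same idea: linearize at the origin with $\phi_{\la}(\mu)=r_0$, pull back the polar structure of $\DD_0$ through the dynamics, and stop at the level $N$ determined by where $\la$ itself sits (your minimal $m$ with $f_{\la}^m(\la)\in O_{\la}$ coincides with the paper's condition $r_0/\rho^{N-1}<|\phi_{\la}(\la)|\leq r_0/\rho^{N}$). The one genuine difference is in how the domains are produced. The paper does not rebuild the construction intrinsically in $A_{\la}$: it invokes the map $\xi_{\la}=\phi_0^{-1}\circ\phi_{\la}$ from \cite{CJK2}, extends $\xi_{\la}^{-1}$ to the portion of $K_0$ up to level $N$, and simply defines $\alpha_{\la,\bj_n}=\xi_{\la}^{-1}(A_{\bj_n})$ and $\beta_{\la,\bj_n}^k=\xi_{\la}^{-1}(B_{\bj_n}^k)$, with the inverse branches $R_{\la,j}$ obtained by conjugating the model branches $R_j$ by $\xi_{\la}$. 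This buys two things for free: all the combinatorial and geometric properties of the fundamental domains are inherited from the model rather than re-verified, and the integer labeling of the branches is automatically consistent with the model's, which matters later when the map $E$ is used to match itineraries in parameter space. Your intrinsic route (choosing $l_{\la}$, writing the logarithmic inverse branches of $f_{\la}$ directly, and repeating the induction of Section~\ref{Model}) is sound, but it obliges you to re-derive the clustering and adjacency of the pullbacks at prepoles and to argue separately that your labeling convention agrees with the model's; you gesture at this by requiring $l_{\la}$ to ``match'' the model's $l$, and that matching is precisely what $\xi_{\la}$ encodes. One small imprecision: it is not quite that ``the only chain affected by the presence of $\la$ is the itinerary of $\la$ itself'' --- since $\la$ is a logarithmic singularity of the inverse, every branch $R_{\la,j}$ applied to the level-$N$ domain containing $\la$ is obstructed, so infinitely many chains of depth $N+1$ fail; but this is harmless because, like the paper, you truncate uniformly at depth $N$.
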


\begin{proof}   For $\la \in \cals_{\la}^0$,  the attractive basin of zero, $A_{\la}$ contains both asymptotic values.    By definition, the linearizing map  $\phi_{\la}$ is a  homeomorphism from $O_{\la}$, an open neighborhood of the origin with $\mu$ on its boundary, onto the disk $\DD_0$ of radius $r_0$.   It is normalized so that $\phi_{\la}(0)=0$ and $\phi_{\la}(\mu)=\phi_0(\la_0)=r_0$.   
  Extending $\phi_{\la}$ by analytic continuation,
   the analogues of the domains $A_{{\mathbf j}_n}$ and $B_{{\mathbf j}_n}^k$ can be defined as in section~\ref{modelcoords} until, for some $n=N$, one of them contains $\la$.    That is, 
 $r_0/\rho^{N-1}<|\phi_\lambda(\lambda)|\leq r_0/\rho^N$.  The level and gradient curves are well defined in these domains by the  branches of $\phi_{\la}^{-1}$.

To this end, we use  the map $\xi_{\la} = \phi_{0}^{-1} \circ \phi_{\la}:  O_{\la} \rightarrow O_{\la_0}$ that we defined In \cite{CJK2}.  
The inverse map,  $\xi_{\la}^{-1}$, extends, as a homeomorphism from a subset, $K_0(\la)$ to a largest subset $\hat{A}_{\la}$ of $A_{\la}$ that contains $\la$.   
Therefore  $\xi_{\la}^{-1}$ is defined on the fundamental domains
    $A_{{\mathbf j}_n}$ and $B_{{\mathbf j}_n}^k$ tesselating $K_0$, $k\geq 0$, $n \leq N$,  where  $N$ is the largest integer such that $|\phi_{\la}(\la)| \leq r_{0}/\rho^N$.

 Set $\alpha_{\la,{\mathbf j}_n}= \xi_{\la}^{-1}(A_{{\mathbf j}_n})$ and $\beta_{\la,{\mathbf j}_n}^k= \xi_{\la}^{-1}(B_{{\mathbf j}_n}^k)$.  The boundary curves of these domains are, by definition,  level and gradient curves for $A_{\la}$ and the relative levels correspond, via the map $\xi_{\la}^{-1}$, to the levels of  the corresponding curves in $K_0$. Moreover, since  we can define  inverse branches $R_{\la,j}$ of $f_{\la}$ on $\hat{A}_{\la}$ using the relation  $R_{\la,j}= \xi_{\la} \circ R_j \circ \xi_{\la}^{-1}$, the indexing is consistent with the model.   Thus  we obtain a coordinate   $(\sigma_{\bj_n}, r, \theta +\pi(n-1))$ for $\hat{A}_{\la}$.  
 \end{proof}
 
We can also use the map $\xi_{\la}^{-1}$ to obtain a tree in $\hat{A}_{\la}$, $T^*_{\la}=\xi_{\la}^{-1}(T^*_{\infty} \cap K_0^{\la})$.   The root of this tree is $x_{\la}^*=\xi_{\la}^{-1}(x_0^*)$.  Its nodes are defined similarly.   Note that some of images of  infinite paths in $T_{\infty}$ are truncated and so are  finite in $T^*_{\la}$.

 \medskip
 \subsection{ Coordinates in $\cals_{\la}^0$ }

In \cite{CJK2} we proved

\begin{thm} There is a homeomorphism $E: \cals_{\la}^0 \rightarrow K_0 \setminus \overline{\Delta}$.  Thus $\cals_{\la}^{0}$ is homeomorphic to an annulus $\AA$.  If $I$ is the inner boundary of $\AA$,  $I=\partial \Delta$ and $E^{-1}$ extends continuously to all points on $I$ except $\la_0$. The point  $E^{-1}(\la_0)$ corresponds to the parameter singularity $\la=0$ on the inner boundary of $\cals_{\la}^0$ where the function $f_{\la}$ is not defined.  The outer boundary of $E^{-1}(\AA)$ is contained in $\partial \calm_{\la}$ and contains all the virtual centers.
\end{thm}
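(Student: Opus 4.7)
The plan is to define the map $E: \cals_{\la}^0 \to K_0 \setminus \overline{\Delta}$ by $E(\la) = \xi_{\la}(\la)$, where $\xi_{\la} = \phi_0^{-1} \circ \phi_{\la}$ is the conjugating map from Theorem~\ref{la basin coords}. Well-definedness is immediate: for $\la \in \cals_{\la}^0$ we have $\mu \in \partial O_{\la}$ and $\la \notin O_{\la}$, so $\phi_{\la}$ extends by analytic continuation to a neighborhood of $\la$ with $|\phi_{\la}(\la)| > r_0 = |\phi_{\la}(\mu)|$; since $\xi_{\la}$ preserves normalized levels, the image lies in $K_0 \setminus \overline{\Delta}$. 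Continuity of $E$ is inherited from the holomorphic dependence of $\phi_{\la}$ on $\la$ together with the smoothness of $\phi_0^{-1}$ away from $q_0$.

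For injectivity, I would invoke quasiconformal rigidity: if $E(\la_1) = E(\la_2)$, the composition $\xi_{\la_2} \circ \xi_{\la_1}^{-1}$ is a holomorphic conjugacy on the coordinate-visible parts of the basins, and propagating equivariantly through the inverse branches $R_{\la,j}$ extends it across the Julia set to a global conjugacy between $f_{\la_1}$ and $f_{\la_2}$; since $\calf_2$ is a holomorphic one-parameter slice for fixed $\rho$, this forces $\la_1 = \la_2$. For surjectivity, given $z \in K_0 \setminus \overline{\Delta}$, I would perform a quasiconformal surgery on a reference parameter $\la' \in \cals_{\la}^0$: construct a Beltrami differential on one fundamental domain of $f_{\la'}$ whose pullback under the iterated inverse branches defines a well-defined conformal structure on $\CC$, solve the Beltrami equation, and invoke the Nevanlinna classification to identify the quasiregular solution with some $f_{\la} \in \calf_2$ realizing coordinate $E(\la) = z$.

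For the boundary behavior, as $E(\la) \to w \in \partial \Delta \setminus \{\la_0\}$, the level $|\phi_{\la}(\la)|$ tends to $r_0$, which forces $\la$ onto $\partial O_{\la}$ in the limit, so the parameter approaches the stratum where both asymptotic values simultaneously touch the linearization boundary; Carath\'eodory-type compactness of normalized linearizations gives continuous extension of $E^{-1}$ at such $w$. The exceptional point $\la_0 \in I$ corresponds to $\la \to 0$, where $f_{\la}$ is undefined by the definition of $\calf_2$, explaining the singular behavior there. On the outer boundary, $|\phi_0(E(\la))| \to \infty$ forces $\la$ to escape every stable linearization, so the limit parameter must lie in $\partial \calm_{\la}$; virtual centers are then recovered by sequences of shift-locus parameters $\la_n$ whose images $E(\la_n)$ approach the appropriate prepole at infinity of the model along the tree $T_{\infty}^*$.

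The main technical obstacle I anticipate is the surjectivity step: the quasiconformal surgery must be equivariant under all the inverse branches $R_{\la,j}$ while preserving the normalization $f_{\la}'(0) = \rho_0$, so that the Beltrami equation solution lands in the one-parameter slice $\calf_2$ with fixed $\rho$ rather than in the full two-parameter family; careful bookkeeping using the sequence labels $\bj_n \in \Sigma$ and a normalized application of the Measurable Riemann Mapping Theorem is required. A secondary subtlety is verifying that every virtual center is realized as an outer-boundary limit, since this requires tracking how the coordinate escape $E(\la) \to p_{\bj_n}$ at the model prepole forces the iterate $f_{\la}^n(\la)$ to tend to infinity in parameter space, i.e.\ forces $\la$ to become a prepole itself in the limit.
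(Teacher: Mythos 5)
Your overall architecture matches the paper's (which only sketches the argument, deferring to [CJK2]): the map is indeed defined by $E(\la)=\xi_{\la}(\la)$ with $\xi_{\la}=\phi_0^{-1}\circ\phi_{\la}$, injectivity is handled by a conjugacy/rigidity argument the paper dismisses as "not difficult," and the substance lies in surjectivity, which both you and the paper attack by quasiconformal surgery. The difference is in how the surgery is set up, and it is here that your version has a genuine gap. You propose to deform a single reference map $f_{\la'}$ by a Beltrami differential supported on one fundamental domain and spread by the inverse branches. But straightening such a structure produces a map quasiconformally \emph{conjugate} to $f_{\la'}$, and the conjugacy carries fundamental domains to fundamental domains preserving their labels in $\Sigma$; consequently the asymptotic value of the new map sits in the fundamental domain with the same address $\bj_n$ as that of $\la'$. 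The deformation space of one reference map therefore sweeps out only the single cell $A_{\bj_n}$ (or $B_{\bj_n}^k$) of $K_0\setminus\overline{\Delta}$ containing $E(\la')$, not the whole annulus. To reach a target $\zeta$ with a different, possibly arbitrarily deep, combinatorial address you would need a reference parameter already realizing that address --- which is exactly what surjectivity is supposed to supply --- or an additional open-and-closed connectivity argument across cell boundaries, which you do not make. The paper avoids this circularity by a different construction: for each target $\zeta$ it builds an increasing tower of covering spaces of the twice-punctured domain $K_0\setminus\{\la_0,\zeta\}$ with compatible covering maps, so that the combinatorics of $\zeta$ is encoded from the outset, and then applies surgery to realize the direct limit as a member of $\calf_2$ with the prescribed $\rho$.

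A secondary weakness is the boundary discussion. The continuous extension of $E^{-1}$ to $\partial\Delta\setminus\{\la_0\}$ and the identification of $E^{-1}(\la_0)$ with the parameter singularity $\la=0$ are asserted via "Carath\'eodory-type compactness of normalized linearizations," which is not an argument: one must show that the limiting parameters exist, are unique, and lie in $\cals_*$ rather than escaping, and that the level blowing up at the outer boundary genuinely forces the limit out of $\cals$ and into $\partial\calm_{\la}$. Likewise, the claim that the outer boundary \emph{contains all} the virtual centers is a separate statement (it is the content of Theorem~\ref{vcbdy}, proved later via the tree $T_{\infty}^*$) and cannot simply be folded into the construction of $E$. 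You correctly identified the normalization constraint (staying in the fixed-$\rho$ slice) as a technical point, but the combinatorial reach of the surgery is the step that would actually fail as written.
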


To define the map $E$ we use the maps $\phi_0$,  $\phi_{\la}$  and  $\xi_{\la} = \phi_{0}^{-1} \circ \phi_{\la}$,  and 
 set $E(\la)=\xi_{\la}(\la)$.  It is not difficult to prove the map is injective.
 We then prove the map is a homeomorphism by the following construction: to each $\zeta \in K_0 \setminus \Delta$,   $\zeta\neq \lambda_0$,  we inductively construct a sequence of covering spaces of $K_0 \setminus  \{\la_0, \zeta\}$ and corresponding covering maps.  Using quasiconformal surgery, we prove the direct limit of this process is a map in $\cals_{\la}^0$.
  \medskip

The inverse holomorphic homeomorphism  $E^{-1}$  can be used to   define a tesselation and   coordinates  in $\cals_{\la}^0$.  For each sequence $\mathbf{j_n}$,  define $\cala_{\mathbf{j}_n}=E^{-1}(A_{\bj_{{n-1}0}})$ and $\calb_{\mathbf{j}_n}^k=E^{-1}(B_{\bj_{n-1}i}^k)$, $i \neq 0$, $k \geq 0$.  This identification  immediately gives us, (see Figure~\ref{ParamCoords}),

 \begin{figure}
  \centering
  \includegraphics[width=5in]{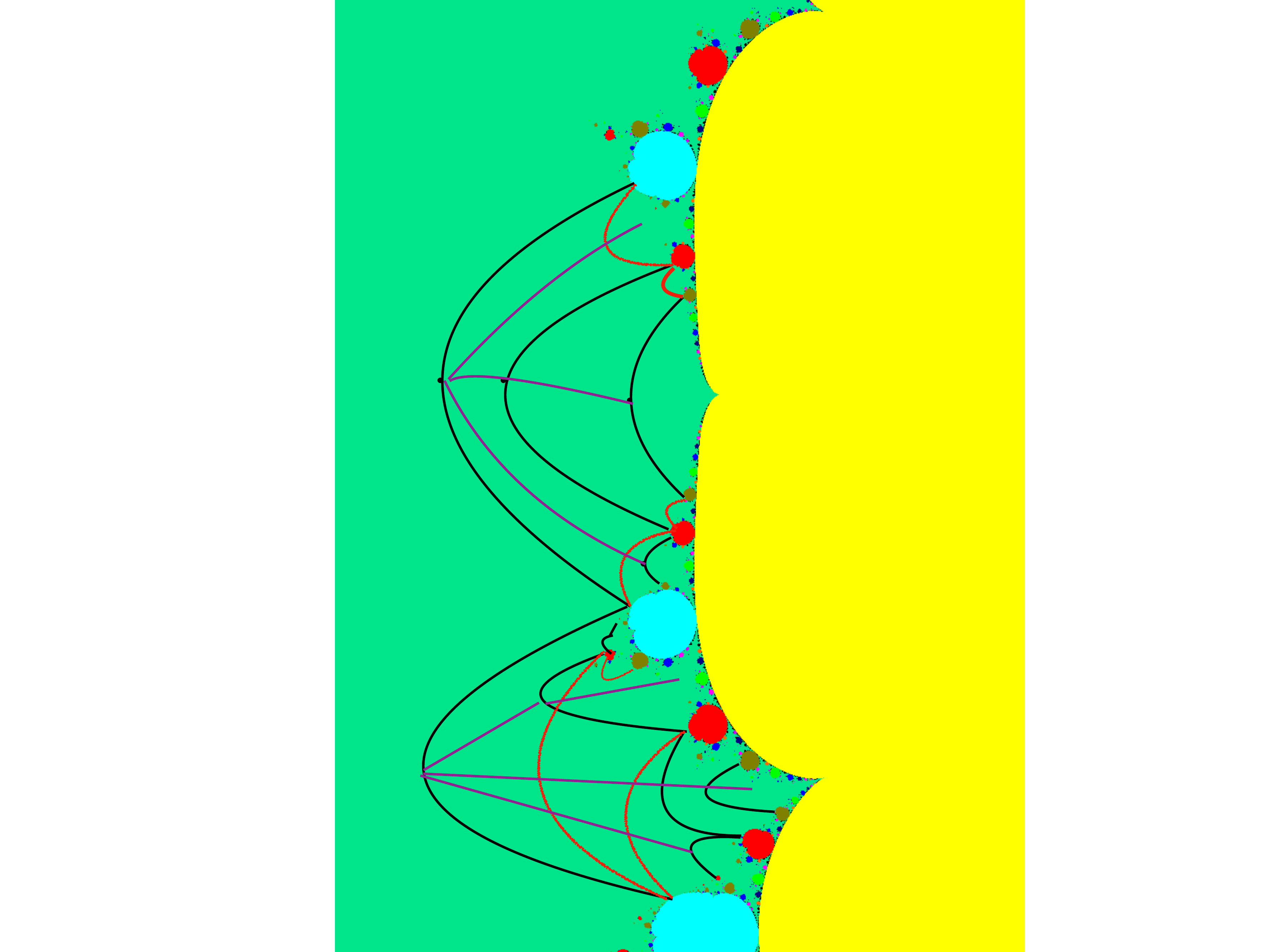}
  \caption{ The parameter space with some of the level curves in black, gradient curves in red and paths from the tree in purple.}\label{ParamCoords}
\end{figure}

\begin{thm}\label{coords in cals}  Each point $\la \in \cals_{\la}^0$ has a unique coordinate  $\la=(\mathcal{X}_{\mathbf{j}_n},r, \theta)$  where $\mathcal{X}_{\mathbf{j}_n}$ is either $\cala_{\mathbf{j}_n}$ or $\calb_{\mathbf{j}_n}^k$, $r \in [0, \infty), \theta = t + (n-1)\pi \in \RR$, $0\leq t<\pi$. \end{thm}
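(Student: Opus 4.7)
The plan is to transport the coordinate structure from $K_0 \setminus \overline{\Delta}$ to $\cals_{\la}^0$ via the homeomorphism $E^{-1}$ and then apply Theorem~\ref{coords in K0} directly.

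First, by the preceding theorem, $E \colon \cals_{\la}^0 \to K_0 \setminus \overline{\Delta}$ is a homeomorphism, and by definition $\cala_{\bj_n} = E^{-1}(A_{\bj_{n-1}0})$ and $\calb_{\bj_n}^k = E^{-1}(B_{\bj_{n-1}i}^k)$. Theorem~\ref{coords in K0} tessellates $K_0 \setminus \{\la_0\}$ (modulo common boundary curves) by the fundamental domains $A_{\bj_n}$ and $B_{\bj_n}^k$; restricting to the subfamily meeting $K_0 \setminus \overline{\Delta}$ yields a tessellation of that region, which pulls back through $E^{-1}$ to a tessellation of $\cals_{\la}^0$ by the $\cala_{\bj_n}$ and $\calb_{\bj_n}^k$.

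Next, I would transport the numerical coordinates themselves. Given $\la \in \cals_{\la}^0$, set $\zeta = E(\la) \in K_0 \setminus \overline{\Delta}$. Theorem~\ref{coords in K0} assigns $\zeta$ a unique triple $(X_{\bj_n}, r, \theta + \pi(n-1))$ with $\theta$ extending continuously across common boundaries of the fundamental domains, and I would declare the coordinate of $\la$ to be $(\mathcal{X}_{\bj_n}, r, \theta + \pi(n-1))$ obtained by relabeling $X_{\bj_n}$ as $\mathcal{X}_{\bj_n}$. Uniqueness follows immediately from the injectivity of $E$ combined with the uniqueness part of Theorem~\ref{coords in K0}.

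The only point requiring genuine attention is identifying the announced range $0 \le t < \pi$ for the angular variable. I would verify this by tracking the angular coordinate through the construction of $E$ in \cite{CJK2}: the linearizer $\phi_{\la}$ is normalized so that $\phi_{\la}(\mu) = r_0$ lies on the positive real axis, and as $\la$ traverses the boundary of a parameter fundamental domain the angular increment picked up in the dynamic plane is compatible with $t \in [0, \pi)$, the remaining angular freedom being absorbed into an increment of the integer index $n$. Continuity of $t$ across parameter-domain boundaries reduces to the already-used continuity of the analytic continuation of $\phi_{\la}$ and of its pullback $\xi_{\la}^{-1}$ that define the fundamental domains. This bookkeeping of the angular variable is the main obstacle; existence and uniqueness themselves are formal consequences of the homeomorphism $E^{-1}$ and Theorem~\ref{coords in K0}.
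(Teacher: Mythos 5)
Your proposal is correct and takes essentially the same route as the paper: the paper gives no separate argument, asserting only that the identification $\cala_{\bj_n}=E^{-1}(A_{\bj_{n-1}0})$, $\calb_{\bj_n}^k=E^{-1}(B_{\bj_{n-1}i}^k)$ ``immediately gives'' the theorem, i.e.\ the coordinates of Theorem~\ref{coords in K0} are transported through the homeomorphism $E$ exactly as you describe, with uniqueness coming from injectivity of $E$. Your additional bookkeeping of the angular range $0\le t<\pi$ addresses a detail the paper passes over in silence, so it is a welcome refinement rather than a deviation.
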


\section{The boundaries of $K_0$ and $\calm_{\la}$}\label{boundaries}

We are now ready to prove our main result.

  \begin{thm}\label{vcbdy}The injective holomorphic  map $E: \cals_{\la}^0 \rightarrow K_0 \setminus \overline{\Delta} $ extends continuously to the virtual centers of $\partial\cals_{\la}^0$   and maps them to prepoles of $Q$ with the same itinerary. \end{thm}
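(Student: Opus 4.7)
The plan is to show that for every virtual center $\la^*\in\partial\cals_\la^0$ with associated finite itinerary $\bj$, every sequence $\la_k\to\la^*$ in $\cals_\la^0$ satisfies $E(\la_k)\to p_{\bj}$ in $K_0$. By Section~\ref{Model} and \cite{FK, CJK2}, such a $\la^*$ is the virtual center of a shell component $\Omega_p$ and is characterized by $f_{\la^*}^{p-1}(\la^*)=\infty$, while the finite sequence $\bj=j_1\cdots j_{p-1}$ that labels $\Omega_p$ in the parameter plane is the same sequence that indexes the prepole $p_{\bj}=R_\bj(\infty)$ of the model $Q$ via Proposition~\ref{Prop J0}.

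My first step uses the holomorphic dependence of iterates on the parameter to control the forward orbit of $\la_k$ under $f_{\la_k}$. For every $j\le p-2$, $f_{\la_k}^j(\la_k)\to f_{\la^*}^j(\la^*)$, so these iterates stay in compact subsets of $\hat{A}_{\la_k}$ uniformly bounded away from the poles of $f_{\la^*}$. For $j=p-1$, $|f_{\la_k}^{p-1}(\la_k)|\to\infty$, and the escape direction is controlled by the inverse branch $R_{j_{p-1}}$ recorded in $\bj$: indeed, it is precisely the chain $R_\bj(\infty)$ that defines $p_{\bj}$. One more iteration then places $f_{\la_k}^p(\la_k)$ close to the boundary of $O_{\la_k}$ with a position converging to $\la^*$ in the virtual-cycle sense.

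The second step combines this orbit control with the coordinate structure of Theorem~\ref{la basin coords}. Because $R_{\la,j}=\xi_\la\circ R_j\circ\xi_\la^{-1}$, the symbolic itinerary of an orbit within the partition of $\hat{A}_{\la_k}$ into fundamental domains $\alpha_{\la_k,\cdot}$ and $\beta_{\la_k,\cdot}^\ell$ is a locally constant function of the parameter, so the orbit control of the first step forces $\la_k\in\alpha_{\la_k,\bj 0}$ for all sufficiently large $k$, and consequently $E(\la_k)=\xi_{\la_k}(\la_k)\in A_{\bj 0}\subset K_0$. To localize the limit at the vertex of $A_{\bj 0}$ that equals $p_{\bj}$, I would apply the linearizing identity $\phi_{\la_k}(f_{\la_k}^p(\la_k))=\rho^p\phi_{\la_k}(\la_k)$: since $f_{\la_k}^p(\la_k)\to\la^*$ and $\la^*$ lies on $\partial O_\la$ in the limit, the modulus $|\phi_{\la_k}(\la_k)|$ converges to the outer-boundary level $r_0/\rho^p$ of $A_{\bj 0}$ identified in Theorem~\ref{coords in K0}, while the angular coordinate is pinned to the value labeling the vertex $p_{\bj}$ by the tract analysis of the first step. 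Applying $\phi_0^{-1}$ then yields $E(\la_k)\to p_{\bj}$ independently of the approaching sequence, giving the continuous extension.

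The main obstacle will be the combinatorial locking in the second step: proving rigorously that the fundamental-domain index of $\la_k$ truly stabilizes to $\bj 0$ rather than oscillating among neighboring admissible labels as $k\to\infty$. This depends on the holomorphic variation of the inverse branches $R_{\la,j}$ on compacts bounded away from the poles of $f_{\la^*}$, combined with the quasiconformal surgery construction of $E$ in \cite{CJK2}, which ensures that the parameter partition $\{\cala_\cdot,\calb_\cdot^\ell\}$ from Theorem~\ref{coords in cals} matches the dynamical partition on a full one-sided neighborhood of $\la^*$ inside $\cals_\la^0$. Once this rigidity is secured, the coordinate computation in the third step becomes a direct consequence of the linearization formulas.
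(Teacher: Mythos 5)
Your proposal runs in the opposite direction from the paper's argument, and the two steps you yourself flag as delicate are, as stated, not repairable. The paper does not try to show that $E(\la_k)\to p_{\bj}$ for \emph{every} sequence $\la_k\to\la^*$; it fixes a path $\tau(t)$ in the tree $T_\infty$ ending at the prepole $p_{\bj_n}$, pulls it back by $E^{-1}$ to a path $\la(t)\subset\cals_{\la}^0$, and shows that path lands. The key mechanism is that $\la_m=\xi_{\la_m}(\tau_m)$ sits on a level curve of $A_{\la_m}$ whose endpoint is the dynamically defined prepole $p_{\la_m,\bj_n}$, that these prepoles move holomorphically in $\la$, and that the triangle inequality $|\la_m-p_{\la_\infty,\bj_n}|\leq|\la_m-p_{\la_m,\bj_n}|+|p_{\la_m,\bj_n}-p_{\la_\infty,\bj_n}|$ then forces the accumulation point to be a virtual cycle parameter; uniqueness comes from discreteness of the prepoles of a given order. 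That is exactly what the Main Theorem needs (accessibility along a curve), and it sidesteps both of your obstacles.

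The concrete gaps in your route are these. First, the ``combinatorial locking'' is false: the itinerary of $\la_k$ does \emph{not} stabilize to $\bj 0$. In the model, infinitely many fundamental domains accumulate at a prepole $p_{\bj}$ --- the two adjacent $A$-type cells, the infinite clusters $\mathbf{B}_{\bj i}$ as $|i|\to\infty$, and deeper-level cells --- and correspondingly infinitely many parameter cells $\cala_{\bj'}$, $\calb_{\bj'}^k$ with distinct labels accumulate at $\la^*$. A sequence $\la_k\to\la^*$ may run through infinitely many of them, so local constancy of the itinerary (which only holds away from cell boundaries, and every neighborhood of $\la^*$ meets infinitely many boundaries) gives you nothing. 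Second, the level coordinate does not converge at a prepole: each cluster domain $\mathbf{B}_{\bj i}$ contains the point $q_{\bj i}$ of level $0$ as well as points of arbitrarily high level, and these domains shrink onto $p_{\bj}$ as $|i|\to\infty$; hence arbitrarily close to $p_{\bj}$ one finds points of every level, and the claim $|\phi_{\la_k}(\la_k)|\to r_0/\rho^p$ fails, as does the ``pinning'' of the angular coordinate. Your Step~1 is also incomplete: knowing $|f_{\la_k}^{p-1}(\la_k)|\to\infty$ does not by itself determine which asymptotic tract the orbit escapes through, so the assertion that $f_{\la_k}^{p}(\la_k)$ converges ``in the virtual-cycle sense'' is unjustified. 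To close the argument you would either have to restrict to sequences with controlled combinatorics --- which is precisely what the paper's tree paths provide --- or prove a genuinely stronger continuity statement for $E$ at $\la^*$ that the paper does not attempt.
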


 \begin{proof}
Fix a finite sequence $\bj_n$ and let $\tau(t)$, $t \in [0,1]$ be a  path in the tree $T_{\infty}$ that ends at the prepole $p_{\bj_n}$. That is, it passes from the root  to the node $x^*_{\bj_n 0}$ and its last branch $r_{\bj_n0}$ goes from $x^*_{\bj_n 0}$ to  the prepole $p_{\bj_n}$ along the level curve $\gamma_{\bj_n 0}$.  The map  $E^{-1}$ then maps $\tau(t), t \in [0,1)$, to a path $\la(t) \in \cals_{\la}^0$.  

 We claim that the accumulation set of $\la(t)$ as $t$ goes to $1$ is a single point and that this point is a virtual cycle parameter.

 Let $\la_{\infty} \in \partial\cals_{\la}^0$ be an accumulation point of $\la(t)$ as $t$ goes to $1$ and let  $t_m$ be  sequence tending to $1$ such that $\la_m=\la(t_m)$ has limit $\la_{\infty}$.   Since we are only interested in $\tau(t)$ for $t$ close to $1$, we may assume all the points $\tau_m=\tau(t_m)$ belong to the last edge $t_{\bj_n}$.

 Note that the attractive basins $A_{\la}$ of $f_{\la}$  and  the boundary curves defining their tesselations  by fundamental domains  $\alpha_{\la,{\mathbf j}_{n-1}0}$ and $\beta_{\la,{\mathbf j}_{n-1}i}^k$, $i \neq 0$, in $\hat{A}_{\la} \subset A_{\la}$ all move holomorphically with $\la$.  
 
  In particular, the unions of these domains  $\boldsymbol{\alpha}_{\la,{\mathbf j}_{n-1}0} = \xi_{\la}^{-1}(\mathbf{A}_{\bj_{n-1}0})$ and $\boldsymbol{\beta}_{\la,{\mathbf j}_{n-1}i}=\xi_{\la}^{-1}(\mathbf{B}_{\bj_{n-1}i})$, $i \neq 0$, and their prepole boundary points, including the prepole $p_{\la,\bj_n}$, move holomorphically.   Thus, as $m$ goes to infinity, the functions $f_{\la_m}$ converge to $f_{\la_{\infty}}$ and the prepoles $p_{\la_m,\bj}$ converge to a prepole $p_{\la_{\infty,\mathbf{j}_n}}$ of $f_{\la_{\infty}}$.  Moreover,  $\tau_m$ is on a level curve in $K_0$,  and the images under $\xi_{\la}$ of the level curves in $K_0$ are level curves in $A_{\la}$,  so each $\la_m = \xi_{\la_m}(\tau_m)$ is on a level curve of the same level.  The level curves  in $A_{\la_m}$ containing $\la_m$ have endpoints at prepoles so that $\lim_{m \to \infty} \xi_{\la_m}(\tau_m)=p_{\la_\infty, \bj_n}$. Therefore either $\la_{\infty} \in A_{\la_{\infty}}$  so that  $\la_{\infty} \in \cals_{\la}^0$,  or
 \[ | \la_m - p_{\la_{\infty,\mathbf{j}_n}}| \leq |\la_m -p_{\la_m,\bj_n}| + |p_{\la_m,\bj_n}- p_{\la_{\infty,\mathbf{j}_n}}| \rightarrow 0  \text{ as } m\to \infty.\]
 The first possibility cannot happen since we assumed $\la_{\infty} \not\in \cals_{\la}^0$.  The second says that $\la_{\infty}$ is a virtual cycle parameter.   Since the sequence $t_m$ was arbitrary and the prepoles of any given order form a discrete set, the limit is independent of the sequence and thus unique.
 \end{proof}

We turn now to the periodic points in the Julia set of $Q$ and show that the map $E^{-1}$ extends to them.
The proof is similar to the above.

\begin{thm}\label{parab bdy pts}  The injective holomorphic map $E^{-1}: K_0\setminus \overline{\Delta} \rightarrow \cals_\la^0$ extends continuously to the repelling periodic points in $\partial K_0$ and maps them to points in $\la \in \partial\cals_{\la}^0$ for which $f_{\la}$ has a parabolic cycle of the same period.
\end{thm}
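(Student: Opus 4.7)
The plan is to mimic the proof of Theorem~\ref{vcbdy} but now replace the finite tree path ending at a prepole with the infinite tree path associated to a periodic sequence. Fix $\bj_n=j_1\ldots j_n$ and let $\bjinf=\bj_n\bj_n\bj_n\ldots$ be the corresponding periodic sequence of period $n$; by the construction in section~\ref{combtree} there is a path $\tau_{\bjinf}\subset T_{\infty}^*$ of infinite hyperbolic length that is invariant under $R_{\bj_n}$ and has the repelling periodic point $z_{\bjinf}\in J_0$ as its unique endpoint. Parametrize it as $\tau:[0,1)\to T_\infty^*$ with $\tau(t)\to z_{\bjinf}$ as $t\to 1$, and set $\la(t)=E^{-1}(\tau(t))\in\cals_\la^0$. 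The goal is to show that $\la(t)$ has a unique accumulation point $\la_\infty\in\partial\cals_\la^0$ and that $f_{\la_\infty}$ has a parabolic cycle of period $n$ on which $\la_\infty$ lies.

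For the existence and uniqueness of the accumulation point, I would run the same compactness argument as in Theorem~\ref{vcbdy}. Pick $t_m\to 1$ with $\la_m=\la(t_m)\to\la_\infty$; I may assume all $\tau_m=\tau(t_m)$ lie on the $R_{\bj_n}$-invariant tail of $\tau_{\bjinf}$, so in the coordinates of Theorem~\ref{coords in cals} the points $\la_m$ eventually lie in a nested sequence of fundamental regions $\boldsymbol{\alpha}_{\la_m,\bj_n^k 0}$ or $\boldsymbol{\beta}_{\la_m,\bj_n^k}$ whose levels tend to infinity. These regions move holomorphically with $\la$ throughout $\cals_\la^0$ via $\xi_\la^{-1}$, and their Euclidean diameters shrink to zero because $Q$ is hyperbolic on the compact piece of $J_0$ between the poles $p_{j_1}$ and $p_{j_1+1}$ (the same argument that gave $\tau_{\bjinf}$ a unique endpoint in the model). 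Holomorphic motion then forces the corresponding shrinking in the parameter plane, so $\la_\infty$ must be independent of the sequence $t_m$; since $\la(t)\to\la_\infty$ along level curves of unbounded level, $\la_\infty$ cannot lie in $\cals_\la^0$, hence $\la_\infty\in\partial\cals_\la^0$.

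To identify $\la_\infty$ as a parabolic parameter, I would exploit the $R_{\bj_n}$-invariance of the tail. For each $\la\in\cals_\la^0$ the composition $R_{\la,\bj_n}=\xi_\la\circ R_{\bj_n}\circ\xi_\la^{-1}$ is an inverse branch of $f_\la^n$ defined on $\hat A_\la$, and its unique fixed point $z_{\la,\bjinf}$ is the repelling period-$n$ point of $f_\la$ with itinerary $\bjinf$; its multiplier $\rho_n(\la)=(f_\la^n)'(z_{\la,\bjinf})$ depends holomorphically on $\la$ and satisfies $|\rho_n(\la)|>1$ throughout $\cals_\la^0$. Passing to the limit along $t_m\to 1$ and using $E(\la_m)=\xi_{\la_m}(\la_m)=\tau_m\to z_{\bjinf}$ together with the continuity of $\xi_\la^{-1}$ in $\la$, I get $\la_\infty=\lim_m z_{\la_m,\bjinf}$, so $\la_\infty$ is itself a period-$n$ point of $f_{\la_\infty}$. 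Its multiplier $\rho_n(\la_\infty)$ satisfies $|\rho_n(\la_\infty)|\ge 1$ by continuity; if the inequality were strict, $\la_\infty$ would remain repelling and, by the $\lambda$-lemma and persistence of repelling cycles, a neighborhood of $\la_\infty$ would still lie in $\cals_\la^0$, contradicting $\la_\infty\in\partial\cals_\la^0$. Hence $|\rho_n(\la_\infty)|=1$, and since $\la_\infty$ lies on the cycle itself (the asymptotic value is on the cycle), standard results on stable components rule out Siegel behavior and force $\rho_n(\la_\infty)$ to be a root of unity, so the cycle is parabolic of period $n$.

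The main obstacle I expect is the last step: ruling out a Siegel/Cremer multiplier and promoting $|\rho_n(\la_\infty)|=1$ to a genuine parabolic relation. The argument that the asymptotic value cannot lie on a Siegel cycle (it must lie in the accumulation set of singular orbits, not on the cycle itself) is standard but will need to be invoked carefully in the meromorphic setting, using the classification of periodic Fatou components from \cite{DK2} together with the fact that, by construction, $\la_\infty$ is literally a point of the period-$n$ cycle. Once that dichotomy is established, the uniqueness of the accumulation point follows as in Theorem~\ref{vcbdy}, since period-$n$ parabolic parameters form a discrete set and the argument above applies to every convergent subsequence.
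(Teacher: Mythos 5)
Your overall strategy coincides with the paper's: take the $R_{\bj_n}$-invariant infinite path $\tau_{\bjinf}$ in the tree ending at $z_{\bjinf}$, push it into $\cals_{\la}^0$ by $E^{-1}$, and analyze accumulation points of $\la(t)=E^{-1}(\tau(t))$. The first part of your argument (nested fundamental regions of unbounded level, holomorphic dependence, shrinking diameters) is in the spirit of the paper's proof, and your closing remark that uniqueness follows from discreteness of the period-$n$ parabolic parameters is a legitimate way to finish.

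However, the step in which you identify $\la_\infty$ as a parabolic parameter contains a genuine error. You assert that $\la_\infty=\lim_m z_{\la_m,\bjinf}$, ``so $\la_\infty$ is itself a period-$n$ point of $f_{\la_\infty}$,'' and you then lean on this to get $|\rho_n(\la_\infty)|=1$ and to exclude Siegel/Cremer multipliers because ``the asymptotic value is on the cycle.'' This cannot be right: for the maps in $\calf_2$ the asymptotic values $\la$ and $\mu$ are omitted values (solving $f_{\la,\rho}(z)=\la$ forces $\la=\mu$, which is excluded by $1/\la-1/\mu=2/\rho$), so $\la_\infty$ can never be periodic --- the paper makes exactly this point to conclude that the limit cycle $z_{\la_\infty,\bjinf}$ is \emph{not} $\la_\infty$. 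The hidden flaw is your appeal to ``continuity of $\xi_\la^{-1}$ in $\la$'': $\xi_\la^{-1}$ is only defined on the truncated region $K_0(\la)$, which never reaches $J_0$, and the family $\{\xi_{\la_m}^{-1}\}$ does not converge to a map defined at $z_{\bjinf}$; the breakdown of this conjugacy at the boundary is precisely the delicate point. The paper instead argues by contradiction: the periodic points $z_{\la_m,\bjinf}$ converge to a periodic point $z_{\la_\infty,\bjinf}\neq\la_\infty$ of $f_{\la_\infty}$ that is repelling or neutral; if it were repelling it would persist on a neighborhood $U$ of $\la_\infty$, and pulling back a monotone path ending at $z_{\bjinf}$ would force the $\la_m$ to converge either into the basin $A_{\la_\infty}$ or onto the repelling point itself, both impossible; hence the cycle is neutral, and the Snail Lemma (not a ``singular value on the cycle'' argument) upgrades neutral to parabolic. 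You would need to replace your identification step with an argument of this kind; as written, the conclusion that the limiting cycle is parabolic does not follow.
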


\begin{proof}  Let $\bjinf=\bj_n\bj_n \ldots$ be a periodic infinite sequence and let $z_{\bjinf}\in \partial K_0$ be the repelling point of order $n$ in the Julia set of $Q$ corresponding to this sequence.  Let $\tau_{\bjinf}(t)$, $t \in [0,1)$  be the infinite   path in $T_{\infty}$ corresponding to the sequence.    It is invariant under $Q^n$ and, since $Q$ is hyperbolic, its endpoint in $J_0$ is well defined and is the  repelling periodic point $z_{\bj}$.

Let $\la(t)= E^{-1}(\tau_{\bjinf}(t))$.  We claim this path lands on $\partial\cals_{\la}^0$ as $t$ goes to $1$.   Let $\la_{\infty}$ be any point in the accumulation set of the path and  let  $t_m$ be a sequence tending to $1$ such that $\la_m=\la(t_m)$ has limit $\la_{\infty}$.


For each $m$,  there is an integer $k(m)$ such that if $\bj_{k(m)}$ is a truncation of the periodic sequence $\bjinf$ after $k(m)$ repetitions of $\bj_n$, $\la_m \in \cala_{\bj_{k(m)}} \subset \cals_{\la}^0$.  This means we also have  $\la_m \in \boldsymbol{\alpha}_{\la_m, \bj_{k(m)} }\subset \hat{A}_{\la_m}$ and $\xi_{\la_m}(\la_m) \in \mathbf{A}_{\bj_{k(m)}} \subset K_0$.  Let 
$\hat\tau_{\bj_{k(m)} }\in T_{\infty}$ be the  tree  $\tau_{\bjinf}$ in $K_0$ up to the node $x_{\bj_{k(m)} 0}^*$,  and  having as its  final branch,  $r_{\bj_{k(m)}}$, the level curve from the node to the prepole $p_{\bj_{k(m) }}$.  The last fundamental domain it passes through is $\mathbf{A}_{\bj_{k(m)}}$. 

Using the map $\xi_{\la}^{-1}$ we can pull back $\hat\tau_{\bj_{k(m)}}$ to a tree $\hat\tau_{\la_m, \bj_{k(m)}} \subset \hat{A}_{\la_m}$.   The last fundamental domain it passes through is $\boldsymbol{\alpha}_{\bj_{k(m)}}$ and this fundamental domain contains $\la_m$.   We can modify the branch of $\hat\tau_{\la_m,\bj_{k(m)}}$ in $\boldsymbol{\alpha}_{\bj_{k(m)}}$ so that it passes through $\la_m$.  We will do this, and by abuse of notation, denote  the modified tree by $\hat\tau_{\la_m,\bj_{k(m)}}$ again.

Everything is holomorphic in $\la$,  and as $k$ goes to infinity, $\bj_{k(m)} \to \bjinf$, so the prepoles $p_{\bj_{k(m)}} \in J_0$ tend to the repelling periodic point $z_{\bjinf} \in J_0$.   It follows from the sequence topology that the  prepoles $p_{\bj_{k(m)}, \la_m}$ tend to the repelling periodic point $z_{\la_m,\bjinf }$ and the repelling periodic points  $z_{ \la_m.\bjinf}$ tend to $z_{\la_{\infty},{\bjinf}}$.  This must be  a repelling or parabolic periodic point of $f_{\la_{\infty}}$.   It cannot be the point $\la_{\infty}$ because  an asymptotic value of $f_{\la}$ cannot be periodic.

We claim that $z_{ \la_{\infty},{\bjinf} }$  must be a parabolic periodic point of $f_{\la_{\infty}}$.    We first show it must be a neutral periodic point. Suppose  $z_{\la_{\infty},\bjinf}$ is a repelling periodic point.  Then there is a neighborhood $U$ containing $\la_{\infty}$ such that $z_{ \la, \bjinf}$ is repelling for all $\la \in U \cap \cals_{\la}^0$.  In particular, it contains $\la_m$ for large enough $m$.   Then for each such  $m$, we  modify $\hat\tau_{\la_m,\bj_{k(m)}}$ by changing its last branch.  We do this by replacing $r_{\bj_{k(m)}} \in \hat\tau_{\bj_{k(m)}}$ with a path in $K_0$, monotonic increasing with respect to  level, and ending at $z_{\bjinf}$.  We call the result $\hat{\hat\tau}_{\bj_{k(m)}}$.  Then  $\xi_{\la_m}^{-1}( \hat{\hat\tau}_{\bj_{k(m)}})$ is a path in  $A_{\la_m}$ ending at the 
    repelling periodic point  $z_{ \la_m, \bjinf}$.   Again, as $m$ goes to infinity, the $\hat{\hat\tau}_{\bj_{k(m)}}$'s  converge to a path in $\CC$ with endpoint $z_{\la_{\infty}, \bjinf}$, the periodic endpoint of $f_{\la_{\infty}}$.   If $\la_m$ were a point on $\xi_{\la_m}^{-1}( \hat{\hat\tau}_{\bj_{k(m)}})$, the $\la_m$'s would either converge to a point in $A_{\la_{\infty}}$ or to a repelling periodic point of $f_{\la_{\infty}}$.   The first case cannot happen since $\la_{\infty}$ is not an interior point of $\cals_{\la}^0$ and the second cannot happen since $\la_{\infty}$ cannot be periodic.

Therefore the fixed point  $z_{ \la_{\infty},{\bjinf} }$  is neutral.   A standard application of the Snail Lemma \cite[p.154]{M2}, shows that it must be parabolic.
 \end{proof}

As a corollary of the proof of this theorem, it follows  that the injective homeomorphism $E^{-1}: K_{0}\setminus \overline{\Delta}\to {\mathcal S}^{0}_{\lambda}$  extends continuously to the eventually periodic points in $\partial K_{0}$ and maps them to points in $\lambda\in \partial {\mathcal S}_{\lambda}^{0}$.  Because $\la_{\infty}$ does not belong to the cycle containing  $z_{\bjinf, \la_{\infty}}$, but maps onto it in finitely many steps, and $\la_{\infty}$ does belong to the Julia set, the cycle is repelling.
This, together with Theorem~\ref{vcbdy} and Theorem~\ref{parab bdy pts},
 completes the  proof of the Main Theorem in the introduction.

\end{document}